\numberwithin{equation}{section}
\def \ep{\varepsilon}
\renewcommand{\l}{\left}
\renewcommand{\r}{\right}
\def \C{\mathbb{C}}
\def \Q{\mathbb{Q}}
\def \rL{\mathrm L}
\def \N{\mathbb{N}}
\def \M2{\mathrm{M}_2}
\def \R{\mathbb{R}}
\def \Z{\mathbb{Z}}
\def \cV{\mathcal{V}}
\def \sl2r{\mathrm{SL}(2,\R)}
\newcommand{\beq}{\begin{equation}}
\newcommand{\eeq}{\end{equation}}
\def\Ell{\mathrm{L}}
\def\tr{\operatorname{tr}}
\newcommand{\eqdef}{\stackrel{\rm def}{=\kern-3.6pt=}}
\theoremstyle{plain}
\newtheorem{theorem}{\bf Theorem}[section]
\newtheorem{lemma}[theorem]{\bf Lemma}
\newtheorem{prop}[theorem]{\bf Proposition}
\newtheorem{cor}[theorem]{\bf Corollary}
\theoremstyle{definition}
\theoremstyle{remark}
\newtheorem{remark}[theorem]{\bf Remark}
\renewcommand{\le}{\leqslant}
\renewcommand{\ge}{\geqslant}
\newcommand{\dist}{\mathop{\mathrm{dist}}\nolimits}
\newcommand{\var}{\mathop{\mathrm{Var}}\nolimits}
\renewcommand{\qed}{\vrule height7pt width5pt depth0pt}
\title
[Sharp arithmetic delocalization]{Sharp arithmetic delocalization for quasiperiodic operators with potentials of semi-bounded variation.}
\author[S. Jitomirskaya]{Svetlana Jitomirskaya}
\address{Department of Mathematics, University of California, Berkeley, CA 94720, USA.}
\email{sjitomi@berkeley.edu}
\author[I. Kachkovskiy]{Ilya Kachkovskiy}
\address{Department of Mathematics,
	Michigan State University,
	Wells Hall, 619 Red Cedar Road,
	East Lansing, MI 48824,
	United States of America}
\email{ikachkov@msu.edu}
\date{}
\begin{document}
\begin{abstract}
We obtain the sharp arithmetic Gordon's theorem: that is, absence of
eigenvalues on the
set of energies with Lyapunov exponent bounded by the exponential rate
of approximation of frequency by the rationals, for a large class of one-dimensional
quasiperiodic Schr\"odinger operators,  with
no (modulus of) continuity required. The class includes all unbounded
monotone potentials with finite Lyapunov exponents and all potentials
of bounded variation. The main tool is a new uniform upper bound on
iterates of cocycles of bounded variation.
\end{abstract}
\maketitle
\section{Introduction and the main result}
Gordon’s trick \cite{Gor} has been used to prove absence of point
spectra for one-dimensional differential and finite difference operators since \cite{as} (see also
\cite{Damanik}). Roughly, it requires infinitely many 
pinned triple almost repetitions of growing blocks of the potential. It has
been particularly useful for the study of quasiperiodic operators
$H(x)$  on $\ell^2(\Z)$  given by
\beq
\label{eq_h_def}
(H(x)\psi)(n)=\psi(n+1)+\psi(n-1)+f(x+n\alpha)\psi(n).
\eeq
For these operators, such almost repetitions are guaranteed, for all $x$, for frequencies $\alpha \in \R\setminus\Q$
that are sufficiently well approximated by the
rationals\footnote{Conditions of this type are usually called Liouville.} if $f$ is uniformly
continuous. Since for $\alpha\in \R\setminus\Q$ the spectrum of $H(x)$
is
absolutely continuous and absence of point spectrum is Baire generic
\cite{wonder}, for continuous $f$ it of course immediately follows that there is no point
spectrum for $\alpha$ satisfying  {\it some} Liouville-type
condition. The advantage of a Gordon-type argument is that it
quantifies the Liouvilleness.

Moreover, Gordon's trick has also been successfully applied
to operators \eqref{eq_h_def} with discontinuous $f,$ where the Baire
generic argument of \cite{wonder} does not directly apply. For discontinuous $f,$ other than
in the Sturmian case where there are additional considerations
\cite{sturmuniform}, the results usually hold only for a full Lebesgue measure set of phases $x$ rather than all
values of $x$, see, for example, \cite{simmar}. An ultimate result of this kind was
proved by Gordon \cite{Gor2} who showed absence of point spectrum
for generic $\alpha$ for {\it any measurable} $f.$ 
The result of \cite{Gor2}  also specifies the required Liouvilleness,
but in a rather implicit  way that, importantly, depends substantially
on $f,$ through the measure-theoretic rate of it's growth and its
measure-theoretic modulus of continuity.

At the same time, for Lipschitz $f$ already the original Gordon's
trick gives the Liouvilleness requirement fully in terms of the
Lyapunov exponent.

Let $f\colon \R\to [-\infty,+\infty)$ be a $1$-periodic measurable function, satisfying
\beq
\label{eq_finite_integral}
\int_0^1 \log(1+|f(x)|)\,dx<+\infty.
\eeq
Define the one-step and $n$-step transfer matrices associated to the Schr\"odinger eigenvalue equation $H(x)\psi=E\psi$ by
$$
S^{f,E}(x):=\begin{pmatrix}E-f(x)&-1\\1&0 \end{pmatrix},\quad S_n^{f,E}(x):=\prod_{k=n-1}^0 S^{f,E}(x+k\alpha).
$$
Let
\beq
\label{gamma_def1}
L(E):=L(\alpha,S^{f,E})=\lim_{n\to\infty}\frac{1}{n}\int_{[0,1)}\log\l\|S_{n}^{f,E}(x)\r\|\,dx.
\eeq
be the Lyapunov exponent of the corresponding Schr\"odinger cocycle $(\alpha,S^{f,E})$.
Due to \eqref{eq_finite_integral} (see also \eqref{l}), $L(E)$ is well defined and finite\footnote{One can also check that \eqref{eq_finite_integral} is necessary for $L(E)$ to be finite for all $E\in \R$.}.

Let $\frac{p_k}{q_k}\to \alpha$ be the sequence of the continued
fraction approximants. Define the measure of Liouvilleness\footnote{The quantity $\beta(\alpha)+1$ is sometimes referred to as the irrationality measure of $\alpha$.}
$\beta(\alpha)\in [0,\infty]$ by
\beq\label{beta}
\beta(\alpha):=\limsup_{k\to +\infty}\frac{\log q_{k+1}}{q_k}.
\eeq

For Lipschitz $f,$ the Gordon-based arguments provide a proof that $E$ cannot be an
eigenvalue of $H(x)$ entirely though the interplay
between $L(E)$ and $\beta(\alpha).$  Classical results
\cite{as} showed, effectively, that $E$ cannot be an eigenvalue if
$L(E)<C\beta(\alpha)$. It has been conjectured in
\cite{simmar,jcongr} and proved in \cite{JL,AYZ,JL2} that for the Maryland and almost
Mathieu models the {\it sharp arithmetic} condition is
$L(E)<\beta(\alpha)$. It is indeed ``sharp'' because for the almost
Mathieu operator, there exist examples of eigenvalues with
$L(E) =\beta(\alpha)$ \cite{transition}, and it was proved that the
spectrum is pure point for a.e. $x$ if $L(E)>\beta(\alpha)$. The latter result was
recently established for all type I operators, a large open set of
analytic potentials \cite{gj} and is conjectured to be universal for
all analytic potentials. The sharp arithmetic condition for singular
continuous spectrum (i.e. absence of eigenvalues in the regime of
positive Lyapunov exponents) was proved, effectively, for all Lipschitz
$f,$ in \cite{AYZ} (see also \cite[footnote 3]{ten_martini}). However,
as soon as the Lipschitz modulus of continuity is relaxed, the current results
 become less effective \cite{JKocic}, for example
 depending also on the H\"older exponent for H\"older $f$.

 However, the same sharp arithmetic condition works for a.e. $x$ for
 the Maryland model \cite{JL} and some other discontinuous $f$
 \cite{Damanik}. Moreover, for all Lipschitz monotone $f$ pure
 point spectrum was proved in the authors' recent work under the
 complementary arithmetic condition $L(E)>\beta(\alpha)$, all this
 suggesting that the sharp arithmetic condition for absence of point
 spectrum of the form
 $L(E)<\beta(\alpha)$ is {\it universal} for operators \eqref{eq_h_def}.

 This universality is what we prove in this paper, for a large class
 of $f.$

For a $1$-periodic $f\colon \R\to \R$, define
\beq
\label{eq_var_definition}
\var f :=\sup\l\{\sum_{j=0}^{n-1}|f(x_{j+1})-f(x_j)|\colon 0=x_0<x_1\ldots<x_n=1\r\}.
\eeq
For $B\ge 0$, let also
\beq
\label{eq_truncation_def}
[f]_B(x):=\begin{cases}
f(x),&|f(x)|\le B;\\
B,&|f(x)|>B.
\end{cases}
\eeq
We say that a $1$-periodic function $f\colon\R\to [-\infty,+\infty)$ is of {\it of semi-bounded variation} if
\beq
\label{eq_semi_variation}
\cV(f):=\sup_{B\ge 1}\frac{1}{B}\var[f]_B<+\infty.
\eeq
In particular, all $1$-periodic functions monotone on $[0,1)$ are of semi-bounded variation.

For $\alpha\in \R\setminus\Q$, let $H(x)$ be defined by \eqref{eq_h_def}. Our main result is
\begin{theorem}\label{main}
 Suppose that $f\colon \R\to [-\infty,+\infty)$ is a $1$-periodic
 function of semi-bounded variation 
 and satisfies  \eqref{eq_finite_integral}.  Then, for almost every $x\in [0,1)$, the set
$$
\{E\in \R\colon 0<L(E)<\beta(\alpha)\}
$$
can only support singular continuous spectrum of $H(x)$.
\end{theorem}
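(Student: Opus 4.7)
The plan is to combine a Gordon-type three-block argument with the new uniform upper bound on BV cocycle iterates announced in the abstract. Singular continuity on $\{0<L(E)<\beta(\alpha)\}$ is the conjunction of absence of absolutely continuous spectrum, immediate from $L(E)>0$ via Ishii-Pastur-Kotani, and absence of point spectrum, so the only nontrivial task is to exclude eigenvalues. For this, I would argue by contradiction: suppose $\psi$ is an $\ell^2$ eigenfunction of $H(x)$ at energy $E$ and set $\Psi_n=(\psi(n),\psi(n-1))^T=S_n^{f,E}(x)\Psi_0$. The algebraic Gordon lemma (a quantitative Cayley-Hamilton for unimodular $2\times 2$ matrices) then prevents the simultaneous decay of $\|\Psi_{q_k}\|$, $\|\Psi_{2q_k}\|$, $\|\Psi_{3q_k}\|$ as soon as $S_{jq_k}^{f,E}(x)$ is close enough to $S_{q_k}^{f,E}(x)^j$ for $j=2,3$ along a subsequence of denominators $q_k$ realising $\beta(\alpha)$. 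Since $|q_k\alpha-p_k|<1/q_{k+1}$ and $f$ is $1$-periodic, this closeness reduces to a quantitative estimate on $\|S_{q_k}^{f,E}(x+q_k\alpha)-S_{q_k}^{f,E}(x)\|$ that remains small after multiplication by the expected growth $\|S_{q_k}^{f,E}(x)\|\sim e^{L(E)q_k}$, which is precisely what one should hope to achieve once $q_{k+1}\gtrsim e^{L(E)q_k}$.

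A telescoping identity writes $S_{q_k}^{f,E}(x+q_k\alpha)-S_{q_k}^{f,E}(x)$ as a sum over $0\le j<q_k$ of matrix products containing the scalar differences $f(x+j\alpha+\delta_k)-f(x+j\alpha)$, where $\delta_k=q_k\alpha-p_k$. When $f$ has bounded variation, the Denjoy-Koksma inequality controls
\[
\sum_{j=0}^{q_k-1}\bigl|f(x+j\alpha+\delta_k)-f(x+j\alpha)\bigr|\le q_k\int_0^1 |f(y+\delta_k)-f(y)|\,dy+2\var f\le C\var f
\]
uniformly in $x$, since $|\delta_k|\le 1/q_{k+1}$ forces the integral term to vanish. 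This is the scalar input. The new ingredient, and the heart of the paper, should be a uniform upper bound on BV Schr\"odinger cocycle iterates that converts this scalar bound into a matrix-level bound which does not squander a factor of $e^{L(E)q_k}$ for each intermediate product. The naive scheme, which estimates each intermediate product individually by $e^{L(E)q_k}$, is too lossy and only yields the suboptimal condition $L(E)<\beta(\alpha)/3$.

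To pass from bounded to semi-bounded variation and handle the possible unboundedness of $f$, I would truncate at a level $B_k$ growing subexponentially with $k$, apply the BV estimate to $[f]_{B_k}$ (whose variation is at most $B_k\,\cV(f)$), and control the error $f-[f]_{B_k}$ through the integrability condition \eqref{eq_finite_integral}, which also guarantees that the Lyapunov exponents of the truncated cocycles converge to $L(E)$. The measure-zero exceptional set of phases absorbs (i) the null set on which Kingman's theorem fails to yield $\tfrac{1}{n}\log\|S_n^{f,E}(x)\|\to L(E)$, and (ii) a Borel-Cantelli exclusion of phases whose orbit $\{x+j\alpha\}$ approaches the singular set $\{f=-\infty\}$ or the large-value set of $f$ too often along the subsequence $q_k$.

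The principal technical obstacle is the sharp uniform cocycle bound itself: propagating the scalar Denjoy-Koksma control through the matrix product without losing an excess $e^{L(E)q_k}$ factor is exactly what separates the sharp threshold $L(E)<\beta(\alpha)$ from the suboptimal ones obtained by direct telescoping. Any successful argument must exploit the $\mathrm{SL}(2,\R)$ structure of the Schr\"odinger transfer matrices in an essential way, perhaps working in log-scale by estimating $\log\|S_{q_k}^{f,E}(x+\delta)\|-\log\|S_{q_k}^{f,E}(x)\|$ via a Denjoy-Koksma-type bound applied to a BV function derived from $\log\|S^{f,E}\|$, rather than working with matrix norm differences directly. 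Executing this uniformly in $E$ on the target set, while simultaneously accommodating truncation and the possible $-\infty$ values of $f$, is where the core novelty should lie.
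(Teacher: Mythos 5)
Your high-level plan is correct: reduce to absence of eigenvalues via a quantitative Gordon argument along denominators realizing $\beta(\alpha)$, use a new uniform upper bound on BV cocycle iterates to play the role of the usual continuity-based upper bound, truncate to handle unboundedness, and run Borel--Cantelli on the phases. You also correctly identify the uniform upper bound on BV cocycle iterates as the central new ingredient. However, there are two genuine gaps and one misdirected guess.

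The first gap is your scalar estimate. You propose Denjoy--Koksma on $g(y)=|f(y+\delta_k)-f(y)|$ to bound $\sum_{j<q_k}|f(x+j\alpha+\delta_k)-f(x+j\alpha)|$ by $C\var f$, i.e.\ by a \emph{constant}. But in the Gordon identity
$$M_{2q_k}-M_{q_k}^2=\sum_{s=0}^{q_k-1}\bigl(V(s+q_k)-V(s)\bigr)M_{2q_k,q_k+s+1}PM_{s,0}M_{q_k},$$
the matrix factors $\|M_{2q_k,q_k+s+1}\|\,\|M_{s,0}\|$ are uniformly of size $e^{(L+\ep)q_k}$ (this is exactly what the uniform upper bound gives), so a constant bound on the sum of scalar differences yields only $\|(M_{2q_k}-M_{q_k}^2)v\|\lesssim \var f\cdot e^{(L+\ep)q_k}\|M_{q_k}v\|$, which blows up since $\|M_{q_k}v\|\to 0$ only at an unspecified rate. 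What is actually needed is a \emph{pointwise} exponential bound $\max_s|V(s)-V(s+q_k)|\le e^{-(\beta-\ep)q_k}$ ($\beta$-repetitions), so the $q_k$ terms are each of size $e^{-(\beta-\ep)q_k}e^{(L+\ep)q_k}$ and the condition $\beta>L$ closes the argument. The paper obtains the pointwise control from a Vitali-covering estimate (Lemma~\ref{lemma_differentiability1} and Corollaries~\ref{cor_differentiabilty_bounded_variation}, \ref{cor_differentiabilty_semibounded_variation}): the set $\{x:|f(x+\delta)-f(x)|>A\delta\}$ has measure $\lesssim \delta+\var f/A$, a measure-theoretic surrogate for a.e.\ differentiability; taking $\delta\sim e^{-\beta q_k}$, $A\sim e^{\ep q_k}$ and summing over $k$ gives Borel--Cantelli. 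Your Denjoy--Koksma sum bound is simply the wrong type of estimate for this scheme.

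The second gap is uniformity in $E$ of the exceptional null set. The $\beta$-repetitions do not involve $E$, but the telescopic estimate on products of $S^{f,E}$ does, and if it required discarding a null set of phases for each $E$ separately, one could not intersect over the uncountably many $E$ in the target set. The paper handles this by factoring $S^{f,E}(x)=F(x)G(x,E)$ with $F(x)=1+|f(x)|$ scalar and $E$-independent, and $G(\cdot,E)$ a bounded matrix of bounded variation. The uniform upper bound (Theorem~\ref{th_uniform_upper}) for $(\alpha,G(\cdot,E))$ holds for \emph{every} phase with no exceptional set, so the only exceptional set comes from the Koksma control of the scalar $\log F$, which does not depend on $E$. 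Your truncation at level $B_k$ is in the right spirit, but without an $E$-independent decomposition of this kind the argument does not assemble into a single full-measure set of phases.

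Finally, your guess that the uniform upper bound must exploit $\mathrm{SL}(2,\R)$ structure is off the mark. Theorem~\ref{th_uniform_upper} holds for arbitrary $\mathrm{M}_N(\C)$-valued cocycles of bounded variation, and its proof is elementary: fix $m_0$ with $\int g_{m_0}$ close to $L$, truncate $g_{m_0}=\tfrac{1}{m_0}\log\|M_{m_0}\|$ at a level $A$ so the truncation is BV, and apply Koksma to the ergodic sums of $[g_{m_0}]_A$ along the orbit of $x\mapsto x+m_0\alpha$; no two-dimensional or unimodular structure enters at all.
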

We have two immediate corollaries
\begin{cor}
\label{th_gordon1}
Suppose that $f\colon \R\to [-\infty,+\infty)$ is $1$-periodic,
non-decreasing on $[0,1)$, and satisfying
\eqref{eq_finite_integral}. Then the conclusion of Theorem
$\ref{main}$ holds.
\end{cor}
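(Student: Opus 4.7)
The plan is to deduce Corollary \ref{th_gordon1} as an immediate consequence of Theorem \ref{main}. Since the integrability hypothesis \eqref{eq_finite_integral} is already assumed, all that remains is to verify that a $1$-periodic function $f$ which is non-decreasing on $[0,1)$ automatically satisfies the semi-bounded variation condition \eqref{eq_semi_variation}. This reduction is explicitly signaled by the sentence right before Theorem \ref{main} (``In particular, all $1$-periodic functions monotone on $[0,1)$ are of semi-bounded variation''), so the entire content of the corollary is to justify that remark.

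To check that $\cV(f) < \infty$, I would fix $B \geq 1$ and describe $[f]_B$ on $[0,1)$ using the monotonicity of $f$. The level sets $\{f \leq B\}$ and $\{f \geq -B\}$ are each intervals by monotonicity, so their intersection $J_B = \{x \in [0,1) : |f(x)| \leq B\}$ is also an interval, say $[a_B, b_B)$. On $J_B$ the truncation $[f]_B$ coincides with $f$ and is therefore non-decreasing with values in $[-B, B]$; on the complement $[0, a_B) \cup [b_B, 1)$ the truncation equals the constant $B$ by the definition \eqref{eq_truncation_def}. Summing the contributions from (i) the monotone piece on $J_B$, (ii) the two jumps at $a_B$ and $b_B$, and (iii) the possible jump at $x=1$ coming from $1$-periodicity, each of which is bounded by $2B$, yields $\var[f]_B \leq 8B$, so that $\cV(f) \leq 8 < \infty$.

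I do not anticipate any real obstacle here: the estimate is just elementary bookkeeping of the variation of a bounded monotone function. The only minor care needed is in the degenerate cases $J_B = \varnothing$, $J_B = [0,1)$, or $a_B = 0$, $b_B = 1$ (which can occur when $f(0) = -\infty$ or $\sup_{[0,1)} f = +\infty$, both of which are compatible with \eqref{eq_finite_integral}), but in each case the same bound holds by inspection, usually with an even smaller constant. Once $f$ is shown to be of semi-bounded variation, Theorem \ref{main} applies directly and the corollary follows.
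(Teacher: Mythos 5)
Your proposal is correct and takes the same route as the paper: Corollary \ref{th_gordon1} is obtained by observing that $1$-periodic, monotone-on-$[0,1)$ functions are of semi-bounded variation and then invoking Theorem \ref{main}; the paper merely asserts this observation in the sentence preceding Theorem \ref{main}, whereas you spell out the elementary verification (the truncation $[f]_B$ is piecewise: constant $B$, then non-decreasing with range in $[-B,B]$, then constant $B$ again, so its variation is a bounded multiple of $B$).
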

\begin{cor}
\label{th_gordon2}
Suppose that $f\colon \R\to \R$ is $1$-periodic with $\var(f)<+\infty$. Then, the conclusion of Theorem $\ref{main}$ holds.
\end{cor}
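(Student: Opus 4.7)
Fix $E$ with $0<L(E)<\beta(\alpha)$ and $\delta>0$ with $L(E)+4\delta<\beta(\alpha)$. I would follow the Gordon three-block scheme, which forces any formal eigenfunction $\psi$ at energy $E$ out of $\ell^2(\Z)$ as soon as, infinitely often at some scale $n=q_{k_j}$, the transfer matrices $S_{q_{k_j}}^{f,E}(x+j q_{k_j}\alpha)$ for $j\in\{-1,0,1\}$ are exponentially close at rate beating $e^{-nL(E)}$, while the cocycle norms themselves stay below $e^{n(L(E)+\delta)}$. Absence of absolutely continuous spectrum on $\{L(E)>0\}$ is automatic by Ishii--Pastur--Kotani, so it suffices to exclude eigenvalues.

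\textbf{Step 1 (the technical heart).} I would establish a uniform upper bound
\[
\|S_n^{f,E}(x)\|\le \exp\bigl(n(L(E)+\delta)\bigr),\qquad n\ge N,\ x\in[0,1),
\]
with $N=N(\delta,\cV(f),f)$ independent of $x$. For $f$ of bounded variation, the natural tool is a telescoping/swap argument: replacing the factor $S^{f,E}(y+k\alpha)$ by $S^{f,E}(x+k\alpha)$ inside the product costs at most $|f(x+k\alpha)-f(y+k\alpha)|$ in operator norm, so the cocycle norm at an arbitrary phase can be compared to its value at a Birkhoff-typical phase (where Kingman's subadditive theorem delivers the rate $L(E)+o(1)$) with total swap cost bounded by $\var f$. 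For the semi-bounded case I would first truncate $f\mapsto [f]_B$ with $B=B(n)$ growing slowly, use $\var[f]_B\le B\cdot \cV(f)$ in the swap, and control the residual event $\{|f|>B\}$ along the length-$n$ trajectory via \eqref{eq_finite_integral}.

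\textbf{Steps 2 and 3 (almost-repetition and Gordon).} Let $k_j$ realize the $\limsup$ in \eqref{beta}, so $\|q_{k_j}\alpha\|\le e^{-(\beta(\alpha)-\delta)q_{k_j}}$. For $f$ of bounded variation, the classical estimate $\int_0^1|f(x+t)-f(x)|\,dx\le |t|\var f$ gives
\[
\int_0^1\sum_{n=0}^{q_{k_j}-1}|f(x+(n+q_{k_j})\alpha)-f(x+n\alpha)|\,dx\le q_{k_j}\|q_{k_j}\alpha\|\var f,
\]
which is exponentially small in $q_{k_j}$; a Chebyshev--Borel--Cantelli argument (again with a truncation in the semi-bounded case) yields the trajectory-wise bound $e^{-(\beta(\alpha)-2\delta)q_{k_j}}$ for a.e.\ $x$ and infinitely many $j$. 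Telescoping through the matrix product using the Step 1 bound on the partial products then gives
\[
\|S_{q_{k_j}}^{f,E}(x+q_{k_j}\alpha)-S_{q_{k_j}}^{f,E}(x)\|\le \exp\bigl(q_{k_j}(L(E)-\beta(\alpha)+3\delta)\bigr)\longrightarrow 0.
\]
Gordon's three-block lemma, based on the Cayley--Hamilton identity in $\mathrm{SL}(2,\R)$, combined with the Step 1 bound to propagate the error through the next two shifts, then forces $\max\bigl(|\psi(\pm q_{k_j})|,|\psi(2q_{k_j})|\bigr)\ge \tfrac12\max\bigl(|\psi(0)|,|\psi(-1)|\bigr)$, contradicting $\psi\in\ell^2$. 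The corollaries are immediate: monotone functions and $\var f<\infty$ functions both satisfy $\cV(f)<\infty$.

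\textbf{Main obstacle.} Step 1 is the crux, precisely as flagged in the abstract. Kingman's theorem produces the bound only on a full-measure set of phases, potentially depending on $E$; upgrading to a uniform-in-$x$ bound for a discontinuous and possibly unbounded potential has no route through semicontinuity or subharmonicity of the cocycle. The delicate point is balancing the truncation scale $B$ (large enough that $f=[f]_B$ on the entire length-$n$ trajectory with high probability under only \eqref{eq_finite_integral}) against the variation budget $B\cdot\cV(f)$ that drives the swap.
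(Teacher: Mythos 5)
Your overall architecture --- a uniform upper bound on transfer-matrix norms to make Gordon's telescoping controllable, $\beta$-repetitions for a.e.\ phase, Kotani theory for the a.c.\ part, and a Gordon three-block exclusion of eigenvalues --- is exactly the paper's, and you correctly note at the end that the corollary itself is an immediate consequence of the main theorem: a real $1$-periodic $f$ with $\var f<\infty$ is bounded (so \eqref{eq_finite_integral} holds trivially) and satisfies $\var[f]_B\le\var f$ for every $B\ge 1$, hence $\cV(f)\le\var f<\infty$. Your Step 2, using the $\mathrm{L}^1$-modulus bound $\int_0^1|f(x+t)-f(x)|\,dx\le|t|\var f$ followed by Chebyshev and Borel--Cantelli, is a valid and somewhat cleaner alternative to the paper's pointwise Vitali-covering estimates (Lemma~\ref{lemma_differentiability1} and Corollaries~\ref{cor_differentiabilty_bounded_variation}, \ref{cor_differentiabilty_semibounded_variation}).

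The genuine gap is in Step 1. Replacing a single factor $S^{f,E}(y+k\alpha)$ by $S^{f,E}(x+k\alpha)$ changes that one matrix by at most $|f(x+k\alpha)-f(y+k\alpha)|$, but the change it induces in the $n$-fold product is amplified by the adjacent partial products: the telescoping identity produces, at each swap, a term of size $\|M_{n,k+1}\|\cdot|f(x+k\alpha)-f(y+k\alpha)|\cdot\|M_{k}\|$, so the total swap cost is bounded not by $\var f$ but by $\var f\cdot\max_k\|M_{n,k+1}\|\|M_k\|$. This is circular --- bounding those partial products uniformly in $x$ is precisely what Step 1 must deliver, and a priori $\|M_{n,k+1}\|\|M_k\|$ can be as large as $e^{2nL}$. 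The same obstruction persists if you try to do the comparison at the level of $\log\|\cdot\|$, since $\log\|M_n\|$ is not additive in the factors. The paper's Theorem~\ref{th_uniform_upper} avoids the issue entirely by never comparing phases: it treats the scalar subadditive cocycle $g_m(x)=\frac1m\log\|M_m(x)\|$ (which is of bounded variation in $x$ when $M$ is), fixes $m_0$ so that $\int_0^1 g_{m_0}$ is within $\ep/4$ of $L$, uses pure subadditivity to write $\frac1n\log\|M_n(x)\|\le\frac1k\sum_{j<k}g_{m_0}(x+jm_0\alpha)$, and then applies Koksma's inequality to the BV function $g_{m_0}$ against the discrepancy of the orbit $\{x+jm_0\alpha\}$. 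That mechanism, or an equivalent one, is what you need in place of the swap argument; with it, the rest of your outline goes through.
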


Corollary \ref{th_gordon1} is the second part of the arithmetic phase
transition theorem for quasiperiodic operators with $\gamma$-monotone
potentials, in which case it establishes also the singular continuous
spectrum for a.e. $x,$ since a large class of discontinuous $f,$ we have 
$L(E)>0$ for on a set of full spectral measure \cite{DK,Simon_Spencer}. The pure point part of the transition
theorem has been recently obtained in \cite{JK2}. The fact that $L(E)=\beta(\alpha)$ is the exact transition point (at least, for a full measure set of phases) justifies the word ``sharp'' in the title.
\subsection{Acknowledgements} SJ’s work was supported by NSF DMS--2052899, DMS--2155211, and Simons 896624.
IK's work was supported by NSF DMS--1846114, DMS--2052519, and the 2022 Sloan Research Fellowship.

\section{Some preliminaries: discrepancy and sufficient conditions for the Gordon's argument}
\subsection{Discrepancy and Koksma's inequaltiy}Let $x_1,\ldots,x_n\in [0,1]$ be real numbers. Following \cite{Kuipers}*{Definition 1.2}, define their {\it discrepancy} by
\beq
\label{eq_discrepancy1}
D_n^{\ast}(x_1,\ldots,x_n)=\sup\limits_{0<t\le 1}\l|\frac{\#\{k\colon x_{k}\in [0,t)\}}{n}-t\r|.
\eeq
Koksma's inequality (see, for example, \cite{Kuipers}*{Theorem 2.5.1}) states that, for any function $f$ on $[0,1]$ of bounded variation, we have
\beq
\label{eq_koksma}
\l|\int_0^1 f(x)\,dx-\frac{1}{n}\sum_{k=1}^n f(x_k)\r|\le \mathrm \var (f) D_n^{\ast}(x_1,\ldots,x_n).
\eeq
Let 
$$
\alpha=[a_0,a_1,a_2,\ldots]=a_0+\frac{1}{a_1+\frac{1}{a_2+\frac{1}{\ldots}}}
$$
be an irrational number represented by a continued fraction expansion. As usual, denote the continued fraction approximants to $\alpha$ by truncating the above continued fraction:
$$
\frac{p_k}{q_k}:=[a_0,a_1,a_2,\ldots,a_k]=a_0+\frac{1}{a_1+\frac{1}{a_2+\frac{1}{\ldots+\frac{1}{a_k}}}}.
$$
The following well known estimate is contained in \cite[Section 2.3]{Kuipers}, see also \cite{Khinchine,Raven}.
\begin{prop}
\label{prop_discrepancy_rotation}
In the above notation, for irrational $\alpha$ we have
\beq
\label{eq_discrepancy_q}
D_{q_k}(\{x\},\{x+\alpha\},\ldots,\{x+(q_k-1)\alpha\})\le \frac{2}{q_{k}}.
\eeq
As a consequence, 
\beq
\label{eq_discrepancy_n}
D_{n}(\{x\},\{x+\alpha\},\ldots,\{x+(n-1)\alpha\})=o(1),\quad n\to +\infty.
\eeq
\end{prop}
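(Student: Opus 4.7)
The plan is to establish \eqref{eq_discrepancy_q} by comparing the rotation orbit with its rational equispaced counterpart, and then to deduce \eqref{eq_discrepancy_n} from it by a simple subadditivity argument.

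For \eqref{eq_discrepancy_q}, I would compare the orbit $\{x+j\alpha\}_{j=0}^{q_k-1}$ with $\{x+jp_k/q_k\}_{j=0}^{q_k-1}$. Since $\gcd(p_k,q_k)=1$, the map $j\mapsto jp_k\bmod q_k$ is a bijection on $\{0,1,\dots,q_k-1\}$, so the second sequence is a translate of the equispaced set $\{i/q_k\}_{i=0}^{q_k-1}$, whose star-discrepancy equals $1/q_k$. The standard convergent estimate $|\alpha-p_k/q_k|<1/(q_k q_{k+1})$ yields $|j\alpha-jp_k/q_k|<1/q_{k+1}$ for $0\le j<q_k$, so each point of the true orbit lies within $1/q_{k+1}$ of its equispaced counterpart modulo $1$. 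For any $t\in(0,1]$, the two counting functions of $[0,t)$ can differ only through points lying within $1/q_{k+1}\le 1/q_k$ of the endpoints $0$ or $t$; since consecutive equispaced points are separated by exactly $1/q_k$, each endpoint contributes at most $O(1)$ straddling points. Combined with the bound $1/q_k$ for the equispaced sequence, this gives $D^{\ast}_{q_k}\le 2/q_k$ after absorbing the boundary contribution.

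For \eqref{eq_discrepancy_n}, let $\varepsilon>0$ and use that $q_k\to\infty$ to choose $k_0$ with $2/q_{k_0}<\varepsilon/2$. For every $n>2q_{k_0}/\varepsilon$, write $n=\ell q_{k_0}+r$ with $\ell=\lfloor n/q_{k_0}\rfloor\ge 1$ and $0\le r<q_{k_0}$. Each of the $\ell$ consecutive blocks of length $q_{k_0}$ is a rotation translate of the first, so each has star-discrepancy at most $2/q_{k_0}$ by \eqref{eq_discrepancy_q}. The subadditivity $nD^{\ast}_n\le\sum_j n_j D^{\ast}_{n_j}$, a direct consequence of the additivity of the signed count $N(I)-n|I|$ over the partition, then gives
$$
nD^{\ast}_n\le \ell\,q_{k_0}\cdot\frac{2}{q_{k_0}}+r\cdot 1=2\ell+r,
$$
hence $D^{\ast}_n\le 2\ell/n+r/n\le 2/q_{k_0}+q_{k_0}/n<\varepsilon$. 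Since $\varepsilon$ was arbitrary, $D^{\ast}_n=o(1)$.

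The main obstacle is the precise bookkeeping needed to pin down the constant $2$ in \eqref{eq_discrepancy_q}, rather than, say, a slightly larger value; this is handled cleanly in \cite{Kuipers} using three-distance considerations. Alternatively, \eqref{eq_discrepancy_n} follows directly from Weyl's equidistribution theorem for irrational rotations, which bypasses the explicit subadditivity argument altogether.
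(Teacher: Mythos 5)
The paper does not actually prove \eqref{eq_discrepancy_q}: it is stated as a known result and cited to \cite{Kuipers}*{Section 2.3}. The only argument the paper offers is the one-sentence remark after the proposition explaining how \eqref{eq_discrepancy_n} follows from \eqref{eq_discrepancy_q}, namely by writing $n=s_k q_k + r_k$ with $r_k=o(n)$. Your deduction of \eqref{eq_discrepancy_n} is exactly this: writing $n=\ell q_{k_0}+r$ and using the subadditivity $nD_n^*\le\sum_j n_j D_{n_j}^*$ (which is correct, since the signed counts $N_j(t)-n_j t$ add over a partition of the sample) gives $D_n^*\le 2/q_{k_0}+q_{k_0}/n$, which is the argument the paper has in mind. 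So for the ``consequence'' part, your proof matches the paper.

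For \eqref{eq_discrepancy_q} itself, your comparison with the rational orbit $\{x+jp_k/q_k\}$ is a sound and standard route, and the individual ingredients (bijectivity of $j\mapsto jp_k\bmod q_k$, the convergent bound $|j\alpha-jp_k/q_k|<1/q_{k+1}$ for $j<q_k$, and discrepancy $1/q_k$ for a translated equispaced set) are all correct. The one place the argument as written is loose is the ``each endpoint contributes at most $O(1)$ straddling points'' step: since both $0$ and $t$ can each produce one straddler and the equispaced baseline already costs $1/q_k$, a naive count gives $3/q_k$, not $2/q_k$. To recover the constant $2$ you have to use that all the perturbations $j(\alpha-p_k/q_k)$ for $0\le j<q_k$ have the same sign, so a gain at one boundary of $[0,t)$ is accompanied by a loss at the other (or use the three-distance theorem directly, as in \cite{Kuipers}). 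You flag this yourself, so it is an acknowledged rather than a hidden gap; but as stated your argument would only yield a slightly weaker constant. Since the paper only uses the $o(1)$ consequence and the cited bound $2/q_k$ verbatim, this does not affect anything downstream.
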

The rate of decay in \eqref{eq_discrepancy_n} highly depends on $\alpha$ (but can be chosen uniformly in $x$). It can be obtained by finding a sequence $q_k\to +\infty$ such that $n=s_k q_k+r_k$ with $r_k=o(n)$ (for example, making a choice in a way that $s_k\to +\infty$).

\subsection{An abstract Gordon's theorem} A sharp Gordon's theorem (that is, the one that involves the condition $\beta(\alpha)>L(E)$ rather than, say, $\beta(\alpha)>CL(E)$) was first presented in \cite{AYZ} to treat the singular continuous part of the arithmetic spectral transition for the almost Mathieu operator (see also \cite[footnote 3]{ten_martini}).  In this subsection, we will state Proposition \ref{prop_gordon}, which is an abstract versions of such theorem. For the convenience of the reader, we also include the proof, even though it can be easily extracted from the cited results. The proof of Theorem \ref{main}, therefore, reduces to verifying the assumptions of Proposition \ref{prop_gordon}.

Let $V\colon \Z\to \C$, and $q_n\to +\infty$. We will say that $V$ has {\it $\beta$-repetitions along $\{q_n\}$} if, for $n$ large enough,
\begin{equation}\label{eq_beta_repetitions}
    \max_{1\le s < q_n}|V(s)-V(s+q_n)|\le e^{-\beta q_n},\quad  \max_{1\le s < q_n}|V(s)-V(s- q_n)|\le e^{-\beta q_n}.
\end{equation}
For a Schr\"odinger operator on $\ell^2(\Z)$ with the potential $V$
 \begin{equation}\label{Schro}
    (H \psi)(n)= \psi(n+1)+\psi(n-1) + V(n)\psi(n),
\end{equation}
define one-step transfer matrices
$$
A_n(E):=\begin{pmatrix}
E-V(n)&-1\\
1&0
\end{pmatrix}
$$
and the multi-step transfer matrices by
$$
M_{n,k}(E):=\begin{cases}
A_{n-1}(E)A_{n-2}(E)\cdots A_k(E),&k\le n;\\
M_{k,n}^{-1}(E),&k>n.
\end{cases}
$$
Note that we have
$$
M_{n,k}(E)=M_{n,\ell}(E)M_{\ell,k}(E),\quad \text{for} \quad k,\ell,n\in \Z.
$$
Define also
$$
M_n(E):=M_{n,0}(E).
$$
A sequence $q_n\to +\infty$ will be called {\it $\lambda$-telescopic for $V$ at $E$} if the following bounds are satisfied for large enough $n$:
\beq
\label{eq_telescopic_regularity_1}
\|M_{2q_n,q_n+s+1}\|\|M_{s,0}\|\le e^{\lambda q_n},\quad \text{for}\quad s=0,\ldots,q_n-1;
\eeq
\beq
\label{eq_telescopic_regularity_2}
\|M_{-q_n,-q_n+s-1}\|\|M_{s,q_n}\|\le e^{\lambda q_n},\quad \text{for}\quad s=0,\ldots,q_n-1.
\eeq
\begin{prop}
\label{prop_gordon}
Suppose that $V$ has $\beta$-repetitions along a $\lambda$-telescopic sequence at $E$ with $\beta>\lambda>0$. Then $E$ is not an eigenvalue of $H$.
\end{prop}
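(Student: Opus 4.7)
The plan is to run a Gordon-type argument by contradiction. Suppose $E$ is an eigenvalue of $H$ with normalized $\ell^2$-eigenfunction $\psi$, and set $\Phi(n)=(\psi(n),\psi(n-1))^{\top}$. Then $\Phi(n)=M_n(E)\Phi(0)$, $\Phi(0)\neq 0$ (else $\psi\equiv 0$), and $\Phi(n)\to 0$ as $|n|\to\infty$. The goal is to contradict this decay using the $\beta$-repetitions together with the telescopic bounds and a Cayley--Hamilton trick on $M:=M_{q_n}(E)\in\mathrm{SL}(2,\C)$.

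First I would establish the approximation
\[
\|\Phi(2q_n)-M^2\Phi(0)\|+\|\Phi(-q_n)-M^{-1}\Phi(0)\|=o(1),\qquad n\to\infty.
\]
The standard product telescoping identity
\[
\prod_{k=q_n-1}^{0}B_k-\prod_{k=q_n-1}^{0}A_k=\sum_{s=0}^{q_n-1}\Bigl(\prod_{k=q_n-1}^{s+1}B_k\Bigr)(B_s-A_s)\Bigl(\prod_{k=s-1}^{0}A_k\Bigr),
\]
applied with $A_k$ the one-step matrices of $V$ and $B_k=A_{k+q_n}$, combines the bound $\|B_s-A_s\|=|V(s+q_n)-V(s)|\le e^{-\beta q_n}$ from \eqref{eq_beta_repetitions} with \eqref{eq_telescopic_regularity_1} to give
\[
\|\Phi(2q_n)-M^2\Phi(0)\|\le q_n e^{-(\beta-\lambda)q_n}\|\Phi(q_n)\|.
\]
The analogous telescoping on the left half-line, using the other inequality in \eqref{eq_beta_repetitions} paired with \eqref{eq_telescopic_regularity_2}, yields a similar bound for $\|\Phi(-q_n)-M^{-1}\Phi(0)\|$. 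Since $\|\Phi(n)\|\le\sqrt{2}\,\|\psi\|_{\ell^2}=\sqrt{2}$ and $\beta>\lambda$, both errors are $o(1)$.

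Next I would exploit the Cayley--Hamilton identities for $M\in\mathrm{SL}(2,\C)$, namely $M+M^{-1}=\tr(M)\,I$ and $M^2+I=\tr(M)\,M$, evaluated at $\Phi(0)$, to rewrite the two approximations as
\[
\Phi(q_n)+\Phi(-q_n)=\tr(M)\,\Phi(0)+o(1),\qquad \Phi(2q_n)+\Phi(0)=\tr(M)\,\Phi(q_n)+o(1).
\]
A case split on $|\tr(M)|$ then finishes: if $|\tr(M)|\ge 1$ the first identity forces $\max(\|\Phi(q_n)\|,\|\Phi(-q_n)\|)\ge\tfrac12\|\Phi(0)\|-o(1)$; if $|\tr(M)|<1$ the second forces $\max(\|\Phi(2q_n)\|,\|\Phi(q_n)\|)\ge\tfrac12\|\Phi(0)\|-o(1)$, because $\|\Phi(q_n)\|\le\tfrac12\|\Phi(0)\|$ would give $\|\Phi(2q_n)\|\ge\|\Phi(0)\|-|\tr(M)|\|\Phi(q_n)\|-o(1)\ge\tfrac12\|\Phi(0)\|-o(1)$. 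Either way, for all sufficiently large $n$,
\[
\max\bigl(\|\Phi(\pm q_n)\|,\|\Phi(2q_n)\|\bigr)\ge\tfrac14\|\Phi(0)\|>0,
\]
contradicting $\Phi(n)\to 0$ along $n=q_n\to\infty$.

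The least routine step is matching the somewhat asymmetric hypothesis \eqref{eq_telescopic_regularity_2} to the correct telescoping identity on the left half-line, so that the product of ``outer factors'' surrounding each single difference $A_{k-q_n}-A_k$ really is bounded by $e^{\lambda q_n}$; the two-sided form of \eqref{eq_beta_repetitions} is essential here, as is the choice of indexing in the product. Everything else---the right-side telescoping and the Cayley--Hamilton finish---is the textbook sharp Gordon argument.
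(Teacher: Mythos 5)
Your proposal is correct and follows essentially the same route as the paper: the identical telescoping decompositions (paired with the two halves of \eqref{eq_beta_repetitions} and \eqref{eq_telescopic_regularity_1}--\eqref{eq_telescopic_regularity_2}) and the two Cayley--Hamilton identities for $M_{q_n}\in\mathrm{SL}(2,\C)$, leading to a contradiction with the $\ell^2$-decay of $\Phi$ along $\pm q_n,2q_n$. The only cosmetic difference is in the endgame: you split on the size of $|\tr M_{q_n}|$, whereas the paper first derives $|\tr M_{q_n}|<1$ from the first relation and then forces $\|M_{2q_n}v\|>1/2$ from the second; both are standard interchangeable variants.
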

\begin{proof}
The argument in the proof is by now standard
(see \cite{AYZ,JY,JKocic}). While most of the proofs involve additional assumptions, the actual arguments can be stated in terms of $\beta$-repetitions and $\lambda$-telescopic sequences defined above. For the convenience of the reader, we include the complete argument. Since $E$ is fixed, we will drop it from the notation. Let also $P:=\begin{pmatrix}
-1&0\\0&0	
\end{pmatrix}$. We have
$$
M_{-q_n}-M_{q_n}^{-1}=M_{-q_n,0}-M_{0,q_n}=\sum_{s=0}^{q_n-1}\l(V(s-q_n)-V(s)\r) M_{-q_n,-q_n+s-1}PM_{s,q_n},
$$
$$
M_{2 q_n}-M_{q_n}^2=(M_{2q_n,q_n}-M_{q_n})M_{q_n}=\sum_{s=0}^{q_n-1}\l(V(s+q_n)-V(s)\r) M_{2q_n,q_n+s+1}PM_{s,0}M_{q_n}.
$$
Using \eqref{eq_beta_repetitions}, \eqref{eq_telescopic_regularity_1}, and \eqref{eq_telescopic_regularity_2}, we arrive to, for every $v\in \C^2$:
\beq
\label{eq_gordon_assumptions}
\|M_{-q_n}-M_{q_n}^{-1}\|\le q_n e^{-(\beta-\lambda)q_n};\quad \|(M_{2 q_n}-M_{q_n}^2)v\|\le q_n e^{-(\beta-\lambda)q_n}\|M_{q_n}v\|.
\eeq
Suppose that $\psi\in \ell^2(\Z)$ is a non-trivial solution to $Hu=Eu$. Let $v:=\begin{pmatrix}
\psi(0)\\ \psi(-1)
\end{pmatrix}$, and assume that $\|v\|=1$. Since $|\psi(n)|\to 0$ as $n\to +\infty$, we have
\beq
\label{eq_norms}
\|M_{q_n}v\|,\,\|M_{-q_n}v\|,\,\|M_{2q_n}v\|<1/2
\eeq
for $n$ large enough. The characteristic equation
$$
M_{q_n}-(\tr M_{q_n})I+M_{q_n}^{-1}=0,
$$
applied to $v$, implies
\beq
\label{eq_trace}
|\tr M_{q_n}|\le \|M_{q_n}v\|+\|M_{-q_n}v\|+\|M_{-q_n}-M_{q_n}^{-1}\|<1
\eeq
for $n$ large enough, in view of \eqref{eq_gordon_assumptions} and \eqref{eq_norms}. However, another characteristic equation
$$
M_{q_n}^2-(\tr M_{q_n})M_{q_n}+I=0,
$$
also applied to $v$, implies
$$
\|M_{2q_n}v\|\ge \|M_{q_n}^2v\|-\|(M_{2 q_n}-M_{q_n}^2)v\|\ge 1-|\tr M_{q_n}|\|M_{q_n}v\|-\|(M_{2 q_n}-M_{q_n}^2)v\|>1/2,
$$
again, for $n$ large enough in view of \eqref{eq_gordon_assumptions}, \eqref{eq_trace}, \eqref{eq_norms}, which contradicts the last estimate in \eqref{eq_norms}.
\end{proof}



\section{A uniform upper bound for cocycles of bounded variation}
Let $M\colon \R\to \mathrm{M}_N(\mathbb C)$ be a $1$-periodic matrix-valued measurable function. A quasiperiodic cocycle (over an irrational rotation) is a pair $(\alpha,M)$, where $\alpha\in \R\setminus\Q$. The $n$-th iteration of the cocycle $(\alpha, M)$ is defined by
$$
(\alpha,M)^n=(n\alpha,M_n),\quad \text{where}\quad M_n(x)=\prod_{k=n-1}^0 M(x+k\alpha),
$$
and the (maximal) Lyapunov exponent is defined by
\begin{equation}\label{l}
L(\alpha,M):=\lim_{n\to\infty}\frac{1}{n}\int_{[0,1)}\log\|M_{n}(x)\|\,dx=
\inf_{n\in \N}\frac{1}{n}\int_{[0,1)}\log\|M_n(x)\|\,dx.
\end{equation}

We will say that $M$ is of bounded variation if all its matrix elements $M_{ij}=M_{ij}(x)$ satisfy $\var M_{ij}<+\infty$, see \eqref{eq_var_definition}. Our goal is to establish the following uniform upper bound on the iterates of linear cocycles of bounded variation.
\begin{theorem}
\label{th_uniform_upper}
Suppose that $M\colon \R\to \mathrm{M}_N(\mathbb C)$ is $1$-periodic and of bounded variation. Then, for every $\ep>0$, there exists $n_0=n_0(\ep,\alpha,M)>0$ such that for every $n\ge n_0$ one has
$$
\frac{1}{n}\log\|M_n(x)\|\le L(\alpha,M)+\ep,\quad \forall x\in [0,1).
$$
\end{theorem}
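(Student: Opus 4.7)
The plan is to telescope $M_n$ into approximately $n/m$ copies of $M_m$ for a judiciously chosen block length $m$, and then control the resulting Birkhoff-type sum along the orbit of the $(m\alpha)$-rotation, uniformly in $x$, via Koksma's inequality. Fix $\ep>0$ and use the infimum formula in \eqref{l} to choose $m\in \N$ with
$$
\frac{1}{m}\int_0^1 \log\|M_m(x)\|\,dx < L(\alpha,M)+\ep/4.
$$
Writing $n=sm+r$ with $0\le r<m$, submultiplicativity of the norm gives
$$
\log\|M_n(x)\| \le \log\|M_r(x+sm\alpha)\| + \sum_{k=0}^{s-1}\log\|M_m(x+km\alpha)\|.
$$
Since every entry of $M$ is BV and therefore bounded, $\|M\|_{\infty}<\infty$, so the $M_r$ term is at most $m\log\|M\|_\infty$, a constant that contributes $o(1)$ after dividing by $n$. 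The task reduces to a uniform-in-$x$ upper bound on the Birkhoff sum.

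The main difficulty is that applying Koksma requires a BV integrand. Each entry of $M_m$ is a finite sum of products of shifts of BV entries of $M$, hence BV, and the operator norm is Lipschitz in the entries, so $\|M_m(\cdot)\|$ is BV. However, $\log\|M_m(\cdot)\|$ may equal $-\infty$ at zeros of $\|M_m\|$ and is not \emph{a priori} BV, so I truncate from below: set
$$
g_C(x):=\max\bigl\{\log\|M_m(x)\|,-C\bigr\} = \log\max\bigl\{\|M_m(x)\|,e^{-C}\bigr\}.
$$
The inner truncation is BV with variation at most $\var\|M_m\|$; since $\log$ is Lipschitz with constant $e^C$ on $[e^{-C},\infty)$, we obtain $\var(g_C)\le e^C\var\|M_m\|<\infty$. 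As $g_C\ge \log\|M_m\|$ pointwise and $g_C\downarrow \log\|M_m\|$, monotone convergence yields $\int g_C\,dx \downarrow \int \log\|M_m\|\,dx$ as $C\to\infty$, so we fix $C$ large enough that $\frac{1}{m}\int g_C < L(\alpha,M)+\ep/2$.

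With $m$ and $C$ now fixed, apply Koksma's inequality \eqref{eq_koksma} to $g_C$ sampled along $\{x+km\alpha\}_{k=0}^{s-1}$. Since $m\alpha\notin \Q$, Proposition \ref{prop_discrepancy_rotation} gives $D^*_s=o(1)$ uniformly in $x$, so for $s\ge s_0$ (independent of $x$),
$$
\sum_{k=0}^{s-1}\log\|M_m(x+km\alpha)\| \le \sum_{k=0}^{s-1} g_C(x+km\alpha) \le s\!\int_0^1 g_C\,dy + s\var(g_C)D^*_s < sm\bigl(L(\alpha,M)+\tfrac{3\ep}{4}\bigr).
$$
Dividing the assembled estimate by $n$, using $sm/n\to 1$, and absorbing both the $O(1/n)$ contribution of the $M_r$ term and the $(sm/n)$-vs-$1$ gap (which is controlled by $m|L(\alpha,M)+3\ep/4|/n$) into a further $\ep/4$ margin, yields $\frac{1}{n}\log\|M_n(x)\|\le L(\alpha,M)+\ep$ for all $n\ge n_0$, uniformly in $x$.

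The only substantive obstacle is the BV truncation: $\log\|M_m\|$ can blow down at zeros of $\|M_m\|$, which is handled by the truncation-then-monotone-convergence step above. Beyond that, the result reduces to the quantitative unique ergodicity of Proposition \ref{prop_discrepancy_rotation} combined with subadditive telescoping and the infimum characterization of the Lyapunov exponent.
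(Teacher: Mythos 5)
Your proof is correct and follows essentially the same route as the paper: choose the block length $m$ via the infimum characterization of $L(\alpha,M)$, telescope $M_n$ into $\approx n/m$ copies of $M_m$ along the orbit of the $(m\alpha)$-rotation, truncate $\log\|M_m(\cdot)\|$ from below to make it BV, apply Koksma's inequality with the uniform discrepancy bound of Proposition \ref{prop_discrepancy_rotation}, and absorb the length-$r$ remainder block via boundedness of $M$. The only cosmetic difference is the precise form of the truncation (your $\max\{\cdot,-C\}$ versus the paper's $[\cdot]_A$), which has the same effect since $\log\|M_m\|$ is already bounded above.
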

\begin{remark} In the continuous case, the result follows in a
  standard way by subadditivity, unique ergodicity of the irrational
  rotation, and compactness. In the case of bounded almost continuous cocycles\footnote{that is, continuous outside a set whose closure has measure
  zero}, the bound was obtained in \cite{LanaMavi}. Here we show that bounded variation is sufficient.
\end{remark}
We will need several preliminaries in order to proceed with the proof. The sequence $\log\|M_n(x)\|$ forms a sub-additive cocycle with $\frac{1}{n}\int_0^1\log\|M_n(x)\|\,dx$ converging to $L(\alpha,M)$. We will consider iterations of this cocycle over pieces of the trajectory of length $m$ as a subadditive cocycle over the rotation $x\mapsto x+m\alpha$. For large $m$, the interaction between these iterations behaves like  an additive cocycle, for which uniform upper bounds follow from applying Koksma's inequality and monotonicity under truncations. It will be convenient to state the following straightforward lemma separately.
\begin{lemma}
\label{lemma_scalar_upperbound}
Let 
$$
h\in \mathrm{L}^1[0,1);\quad -\infty\le h(x)\le B,\quad\forall x\in [0,1).
$$
Let also 
$$
\{r_0,\ldots,r_{n-1}\}\subset[0,1),\quad D:=D_n^{\ast}(r_0,\ldots,r_{n-1}).
$$ 
Suppose that, for some $A\ge B$, we have $\var[h]_A<+\infty$. Then
\beq
\label{eq_scalar_upperbound}
\frac{1}{n}\sum_{j=0}^{n-1}h(r_j)\le\frac{1}{n}\sum_{j=0}^{n-1}[h]_A(r_j)\le \int_0^1[h]_A(x)\,dx+D\var [h]_A.\eeq
\end{lemma}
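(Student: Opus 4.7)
The plan is to decompose the chain of inequalities into two entirely independent steps: a pointwise monotonicity statement that yields the first inequality, and an application of Koksma's inequality \eqref{eq_koksma} to the truncated function $[h]_A$ that yields the second. Neither step uses anything beyond the hypotheses and the definitions already in the excerpt, so the proof should be short.

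First I would verify that $h(x)\le [h]_A(x)$ holds pointwise on $[0,1)$. By the hypothesis $h(x)\le B\le A$, so the only way $|h(x)|>A$ can occur is $h(x)<-A$, in which case $[h]_A(x)=A>-A>h(x)$ by the definition \eqref{eq_truncation_def}. On the complementary set, $|h(x)|\le A$ and $[h]_A(x)=h(x)$. Summing over $j=0,\ldots,n-1$ and dividing by $n$ yields the first inequality in \eqref{eq_scalar_upperbound}. The fact that $h$ may take the value $-\infty$ on a (necessarily measure-zero by integrability) set does not cause any issue, since the pointwise bound still holds with the convention $-\infty\le A$.

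For the second inequality I would simply invoke Koksma's inequality for the function $[h]_A$, which is bounded and, by the hypothesis of the lemma, has finite total variation on $[0,1)$. Koksma's inequality \eqref{eq_koksma} gives
$$
\left|\int_0^1 [h]_A(x)\,dx-\frac{1}{n}\sum_{j=0}^{n-1}[h]_A(r_j)\right|\le D\cdot \var [h]_A,
$$
and dropping the absolute value produces the second inequality of \eqref{eq_scalar_upperbound}.

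There is essentially no obstacle here: the lemma is a direct packaging of a pointwise truncation bound with the standard discrepancy estimate, tailored so that later applications (where $h$ will be $\frac{1}{m}\log\|M_m(\cdot)\|$ for large $m$, and $B$ will track $L(\alpha,M)+\ep$) can feed in a bounded-variation truncation even when the original $h$ is unbounded below. The only mild care needed is the observation that $h\le B\le A$ forces the truncation to act only from below, so the pointwise comparison goes in the direction required.
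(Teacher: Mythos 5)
Your proposal is correct and takes essentially the same approach as the paper: the first inequality comes from the pointwise observation that $h\le B\le A$ forces the truncation to act only from below (so $h\le[h]_A$), and the second is a direct application of Koksma's inequality to $[h]_A$. The paper's proof is a one-liner stating exactly these two facts; you have simply spelled out the same argument in more detail.
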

\begin{proof}
The first inequality in \eqref{eq_scalar_upperbound} follow from the definition of $h_A$ and the fact that $A\ge B$. The second inequality follows from Koksma's inequality \eqref{eq_koksma}.
\end{proof}

{\noindent\it Proof of Theorem $\ref{th_uniform_upper}$}. 
Let $g_m(x):=\frac{1}{m}\log\|M_m(x)\|$. From subadditivity of the original cocycle, we have (in the first inequality, without loss of generality)
$$
+\infty>\int_0^1 g_m(x)\,dx\ge \int_0^1 g_{2m}(x)\,dx\ge L(\alpha,M),\quad \forall m\in \N.
$$
Since $M$ is bounded, we have
\beq
\label{eq_upper_bv_bounded}
g_m(x)\le C_1=C_1(M),\quad\forall x\in[0,1),\quad \forall m\in \N.
\eeq
and, since $\|M_m(\cdot)\|$ is of bounded variation and $[\log|\cdot|]_A$ is a Lipschitz function, we also have
\beq
\label{eq_c3}
\var[g_{m_0}]_A\le C_2=C_2(m_0,\alpha,M,A).
\eeq
We will prove the upper bound by an $\ep/4$-argument. Fix $\ep>0$ and find some $m_0>0$ and $A=A(m_0)>C_1$ such that
\beq
\label{eq_epsilon_1}
\int_0^1 g_{m_0}(x)\,dx-L(\alpha,M)<\ep/4,\quad \int_0^1 [g_{m_0}]_A(x)\,dx-\int_0^1 g_{m_0}(x)\,dx\le \ep/4.
\eeq
Note that the left hand sides of both inequalities are non-negative, and all terms are finite. Use Proposition \ref{prop_discrepancy_rotation} to find $k_0=k_0(A,m_0,\alpha,M)$ large enough so that
\beq
\label{eq_epsilon_2}
D_k:=D_{k}^{\ast}(x,x+m_0\alpha,x+2m_0\alpha,\ldots,x+(k-1)m_0\alpha)<C_2^{-1}\ep/4,\quad \forall k\ge k_0, \quad \forall x\in [0,1).
\eeq
Let $n=k m_0$ with $k\ge k_0$. Then, by expressing $M_n(x)$ as a product of $M_k(\cdot)$, one can apply Lemma \ref{lemma_scalar_upperbound} combined with \eqref{eq_epsilon_1}, \eqref{eq_epsilon_2} and obtain
\begin{multline}
\label{eq_upper_bound_subsequence}
\frac{1}{n}\log\|M_n(x)\|\le \frac{1}{k}\sum_{j=0}^{k-1}g_{m_0}(x+km_0\alpha)\le \int_0^1[g_{m_0}]_A(x)\,dx+C_2 D_k\\ \le L(\alpha,M)+3\ep/4.	
\end{multline}
This completes the proof of the theorem for $n$ divisible by $m_0$. In order to consider the general case, let $n=k m_0+m$ large enough so that $k\ge k_0$ and $0\le m<m_0$. Then
$$
\log\|M_n(x)\|\le \log\|M_{k m_0}(x)\|+m g_{m}(x+k m_0\alpha)\le k m_0(L(\alpha,M)+3\ep/4)+m_0 C_1.
$$
After dividing by $n$ and using the fact that $n\ge k m_0$, one obtains
$$
\frac{1}{n}\log\|M_n(x)\|\le L(\alpha,M)+3\ep/4+\frac{m_0 C_1}{n}\le L(\alpha,M)+3\ep/4+\frac{C_1}{k}.
$$
The proof can now be completed by choosing $k>4 C_1/\ep$.\,\,\qed
\begin{remark}
\label{rem_weakly_bounded}
In Theorem \ref{th_uniform_upper}, we only used the properties of the scalar sub-additive cocycle $g_m$. It is easy to restate the result completely in these terms. Let $\{g_m\}_{m\in \Z}$ be a sub-additive real scalar cocycle over a circle rotation. Suppose that $g_0(x)$ is bounded from above and $\var [g_m]_A<+\infty$ for all $m\in \Z$, $A\in \R$. Suppose also that $L(g)>-\infty$. Then the same conclusion holds:  for every $\ep>0$, there exists $n_0=n_0(\ep,\alpha,g)>0$ such that for every $n\ge n_0$ one has
$$
\frac{1}{n} g_n(x)\le L(\alpha,g)+\ep,\quad \forall x\in [0,1).
$$
Since we only used the fact that $D_k\to 0$, the arguments apply to all ergodic circle maps, rather than just rotations.
\end{remark}

\section{Estimates for scalar functions of semi-bounded variation}
In this section, we obtain some estimates for scalar functions of bounded variation, which will be used in obtaining $\beta$-repetitions and the $\lambda$-telescopic property later for Schr\"odinger cocycles. 
\subsection{Finite difference estimates}
The original proofs of Gordon's theorem for quasiperiodic operators require some kind of uniform differentiability (e.~g. Lipschitz property) of $f$ in order produce $\beta$-repetitions. The following lemma, as well as Corollaries \ref{cor_differentiabilty_bounded_variation} and \ref{cor_differentiabilty_semibounded_variation}, are motivated by the fact that every function of bounded variation is almost everywhere differentiable.
\begin{lemma}
\label{lemma_differentiability1}
Let $f$ be a $1$-periodic function on $\R$, non-decreasing on $[0,1)$, with 
$$
|f(x)|\le B,\quad \forall x\in\R.
$$
For every $\delta\in (0,1)$ and $A>0$, we have
\beq
\label{eq_original_set}
\l|\{x\in [0,1)\colon |f(x+\delta)-f(x)|>A\delta\}\r|\le 2\delta+6BA^{-1}.
\eeq
\end{lemma}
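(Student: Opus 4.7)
The plan is to exploit the monotonicity of $f$ on $[0,1)$ to replace the absolute value $|f(x+\delta)-f(x)|$ by the signed increment $g(x):=f(x+\delta)-f(x)$ on the bulk of $[0,1)$, and then bound the level set $\{g>A\delta\}$ via Markov's inequality after a short change-of-variables calculation of $\int g$.

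Concretely, I would split $[0,1)=[0,1-\delta)\cup[1-\delta,1)$. On the boundary piece $[1-\delta,1)$, the point $x+\delta$ wraps around by $1$-periodicity, so monotonicity is lost and $|f(x+\delta)-f(x)|$ could be as large as $2B$; but the piece itself has measure $\delta$, which can simply be absorbed into the right-hand side. On the bulk piece $[0,1-\delta)$, both $x$ and $x+\delta$ lie in $[0,1)$, so monotonicity gives $g(x)\ge 0$ and $|f(x+\delta)-f(x)|=g(x)$.

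Next, a translation substitution gives
$$
\int_0^{1-\delta} g(x)\,dx \;=\; \int_\delta^1 f(u)\,du - \int_0^{1-\delta} f(u)\,du \;=\; \int_{1-\delta}^1 f(u)\,du - \int_0^\delta f(u)\,du,
$$
where the second equality (valid for $\delta\le 1/2$) comes from cancelling the common piece $\int_\delta^{1-\delta} f$; for $\delta>1/2$, the first difference is already controlled directly by $2B(1-\delta)\le 2B\delta$. Either way, $|f|\le B$ and each remaining integral runs over an interval of length $\delta$, so $\int_0^{1-\delta} g\,dx\le 2B\delta$. Markov's inequality then yields
$$
\bigl|\{x\in[0,1-\delta):\,g(x)>A\delta\}\bigr| \;\le\; \frac{2B\delta}{A\delta} \;=\; \frac{2B}{A}.
$$
Combining with the boundary contribution $\delta$ gives a total bound of $\delta+2BA^{-1}$, which is strictly sharper than the stated $2\delta+6BA^{-1}$.

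There is no genuine obstacle here; the only technical nuance is the periodic wrap-around near $x=1-\delta$, which is what forces the linear-in-$\delta$ term on the right and prevents the bound from being purely of the form $BA^{-1}$. Monotonicity does the rest essentially for free.
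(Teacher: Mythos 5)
Your proof is correct, and it takes a genuinely different route from the paper. The paper proves the lemma via a covering argument: it applies Vitali's covering lemma to the bad set $S\subset(\delta,1-\delta)$, extracts disjoint intervals $[x_j,x_j+\delta]$ whose $3\delta$-dilates cover $S$, and then uses monotonicity to bound $\sum_j (f(x_j+\delta)-f(x_j))\le 2B$ since the intervals are disjoint and $f$ has total increase at most $2B$ over $[0,1)$; this yields $|S|\le 3\cdot A^{-1}\cdot 2B=6BA^{-1}$ plus the boundary contribution $2\delta$. Your approach instead integrates the nonnegative increment $g(x)=f(x+\delta)-f(x)$ over the bulk $[0,1-\delta)$, uses a change of variables to evaluate $\int g\le 2B\delta$, and then invokes Markov's inequality. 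Both proofs use monotonicity in an essential way — the paper to control the sum of disjoint increments, you to ensure $g\ge 0$ so Markov applies directly to $|f(x+\delta)-f(x)|=g(x)$. Your argument is more elementary (no covering lemma) and gives the sharper constant $\delta+2BA^{-1}$ in place of $2\delta+6BA^{-1}$; the Vitali-based argument is slightly more robust in spirit (it localizes rather than averaging), but for this particular statement your integration-plus-Markov route is cleaner.

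Also a sanity check on the paper's version: the displayed definition of $S$ there contains a visible typo ($f(x)<f(x)+A\delta$ should read $f(x+\delta)-f(x)>A\delta$), so you shouldn't be troubled if the paper's writeup at first looks vacuous — the intended set is the one you worked with.
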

\begin{proof}
The set 
$$
S:=\{x\in (\delta,1-\delta)\colon f(x)<f(x)+A\delta\}
$$
satisfies the trivial inclusion
$$
S\subset \cup_{x\in S}[x,x+\delta]\subset [0,1).
$$
By Vitali's covering lemma, there exist finitely many $x_j\in S$ such that the intervals $[x_j,x_j+\delta]$ are disjoint, and the intervals $[x_j-\delta,x_j+2\delta]$ cover $S$. Let
$$
S':=\cup_j [x_j,x_j+\delta].
$$
We have, using the fact that the intervals $[x_j,x_j+\delta]$ are disjoint and that $f$ is non-decreasing:
$$
|S|\le 3 |S'|=3\sum_j \l|[x_j,x_j+\delta]\r|<3 A^{-1}\sum_j(f(x_j+\delta)-f(x_j))\le 6 A^{-1}B.
$$
Since the remaining part of the set in \eqref{eq_original_set} is contained in $[0,\delta]\cup[1-\delta,1)$, this completes the proof.
\end{proof}
\begin{cor}
\label{cor_differentiabilty_bounded_variation}
Let $f$ be a $1$-periodic function on $\R$ with $\var f<+\infty$. Then a similar conclusion holds:
For every $\delta\in (0,1)$ and $A>0$, we have
$$
\l|\{x\in [0,1)\colon |f(x+\delta)-f(x)|>A\delta\}\r|\le 4\delta+6A^{-1}\var f.
$$
\end{cor}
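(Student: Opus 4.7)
The plan is to reduce to the monotone case already established in Lemma \ref{lemma_differentiability1} via the Jordan decomposition. Write $f = f_1 - f_2$ on $[0,1)$, where $f_1, f_2$ are non-decreasing and normalized so that $f_i(0) = 0$ (for instance, $f_1$ the positive variation of $f$ on $[0,x]$ and $f_2 = f_1 - (f - f(0))$). Then $0 \le f_i(x) \le \var f$ on $[0,1)$ and $\var f_1 + \var f_2 = \var f$. Extending $f_1, f_2$ to $\R$ by $1$-periodicity, each is a bounded $1$-periodic function non-decreasing on $[0,1)$, so Lemma \ref{lemma_differentiability1} applies to each with $B = \var f$.

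From $f = f_1 - f_2$ and the triangle inequality,
$$
|f(x+\delta) - f(x)| \le |f_1(x+\delta) - f_1(x)| + |f_2(x+\delta) - f_2(x)|,
$$
hence if the left-hand side exceeds $A\delta$ then at least one of the summands on the right exceeds $A\delta/2$. Consequently,
$$
\{x \in [0,1) \colon |f(x+\delta) - f(x)| > A\delta\} \subseteq \bigcup_{i=1}^{2} \{x \colon |f_i(x+\delta) - f_i(x)| > A\delta/2\},
$$
and applying Lemma \ref{lemma_differentiability1} to each $f_i$ (with $B = \var f$, threshold $A/2$) and summing yields a bound of the form $c_1 \delta + c_2 A^{-1} \var f$ for absolute constants. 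The precise constants $4\delta + 6 A^{-1} \var f$ stated in the corollary can be recovered by tracking the estimate more carefully (using $\var f_1 + \var f_2 = \var f$ rather than the cruder bound $\max_i \var f_i \le \var f$) or, more cleanly, by repeating the Vitali covering argument in the proof of Lemma \ref{lemma_differentiability1} directly and replacing the use of monotonicity by the observation that for any finite disjoint family $\{[x_j, x_j+\delta]\} \subseteq [0,1)$ one has $\sum_j |f(x_j+\delta) - f(x_j)| \le \var f$ by the very definition of total variation.

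No substantive obstacle is expected: conceptually the step from the monotone case to the bounded variation case is immediate once the Jordan decomposition is in hand, and the only care required is in bookkeeping the universal constants to match the form stated in the corollary.
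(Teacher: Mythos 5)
Your approach (Jordan decomposition and appeal to Lemma \ref{lemma_differentiability1}) is exactly the paper's. But the bookkeeping as you first set it up does not quite close: with the triangle inequality $|f(x+\delta)-f(x)|\le |f_1(x+\delta)-f_1(x)|+|f_2(x+\delta)-f_2(x)|$ you are forced to halve the threshold, and even with the sharper accounting $\var f_1+\var f_2=\var f$ and the tightened form of Lemma \ref{lemma_differentiability1} (which really gives $2\delta+3A^{-1}\var f_i$ when one tracks $\sum_j(f_i(x_j+\delta)-f_i(x_j))\le\var f_i$ rather than $2B$), halving the threshold costs a factor of $2$, landing you at $4\delta+12A^{-1}\var f$, not $4\delta+6A^{-1}\var f$. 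The observation that recovers the stated constant is that since $f_1,f_2$ are non-decreasing, the increments $f_i(x+\delta)-f_i(x)$ are nonnegative (for $x\in(\delta,1-\delta)$), so $|f(x+\delta)-f(x)|\le\max_i\bigl(f_i(x+\delta)-f_i(x)\bigr)$ and the threshold $A$ need \emph{not} be halved; then applying the lemma to each $f_i$ at threshold $A$ with $|f_i|\le\frac12\var f$ (or with the variation-tracking refinement) gives $4\delta+6A^{-1}\var f$. Your alternative suggestion—rerunning the Vitali covering argument directly and bounding $\sum_j|f(x_j+\delta)-f(x_j)|\le\var f$ over disjoint intervals—is correct, cleaner, and in fact yields the better constant $2\delta+3A^{-1}\var f$, so it certainly suffices. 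In short: correct strategy, same as the paper, with one bookkeeping refinement needed that you correctly anticipated but did not pin down explicitly.
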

\begin{proof}
The function $f$ is a difference of functions of the kind considered in Lemma \ref{lemma_differentiability1} whose absolute values are bounded by $\frac12 \var f$.
\end{proof}
\begin{cor}
\label{cor_differentiabilty_semibounded_variation}
Let $f\colon \R\to [-\infty,+\infty)$ be a $1$-periodic function satisfying $\eqref{eq_finite_integral}$: that is,
$$
\int_0^1\log(1+|f(x)|)\,dx<+\infty.
$$
For every $\delta\in (0,1)$ and $A,B>0$, we have
\begin{multline}
\label{eq_original_set_2}
\l|\{x\in [0,1)\colon |f(x+\delta)-f(x)|>A\delta\}\r|\le \\ \le 4\delta+6 A^{-1}e^{B}\var[f]_{e^B}+\frac{2}{B}\int_{\{x\in [0,1)\colon\log(1+|f(x)|)>B\}}\log(1+|f(x)|)\,dx.
\end{multline}
\end{cor}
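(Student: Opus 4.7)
The plan is to decompose $[0,1)$ according to whether the values $|f(x)|$ and $|f(x+\delta)|$ stay bounded by $e^B$ or not, handle the ``large'' set by a Chebyshev estimate, and reduce the ``small'' set to the already-proved Corollary~\ref{cor_differentiabilty_bounded_variation} applied to the truncation $[f]_{e^B}$. This fits naturally with the way the semi-bounded variation hypothesis is set up, since $[f]_{e^B}$ is exactly the bounded-variation object whose variation we control.

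More concretely, first I would introduce the set
$$
F:=\{x\in[0,1)\colon |f(x)|>e^B\}\subseteq \{x\in[0,1)\colon \log(1+|f(x)|)>B\},
$$
since $|f|>e^B$ forces $\log(1+|f|)>B$. By Chebyshev's inequality,
$$
|F|\le \frac{1}{B}\int_{\{\log(1+|f|)>B\}}\log(1+|f(x)|)\,dx.
$$
The set where $|f(x+\delta)|>e^B$ is a translate of $F$ and has the same measure, so the ``bad'' set
$G:=\{x\colon |f(x)|>e^B\}\cup\{x\colon |f(x+\delta)|>e^B\}$ has measure at most $2|F|$, producing the last term in \eqref{eq_original_set_2}.

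Next, on $[0,1)\setminus G$ one has $|f(x)|\le e^B$ and $|f(x+\delta)|\le e^B$ simultaneously, so by the definition \eqref{eq_truncation_def} of truncation, $f(x)=[f]_{e^B}(x)$ and $f(x+\delta)=[f]_{e^B}(x+\delta)$. Consequently the event $|f(x+\delta)-f(x)|>A\delta$ restricted to this set coincides with $|[f]_{e^B}(x+\delta)-[f]_{e^B}(x)|>A\delta$. Since $[f]_{e^B}$ is a bounded $1$-periodic function of finite total variation $\var[f]_{e^B}$, Corollary~\ref{cor_differentiabilty_bounded_variation} applies and controls the measure of this subset by $4\delta+6A^{-1}\var[f]_{e^B}$ (and a fortiori by the $6A^{-1}e^B\var[f]_{e^B}$ written in \eqref{eq_original_set_2}). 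Adding these two contributions yields the claim.

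The only mild subtlety is that one must verify that the truncation $[f]_{e^B}$ still lies in the class to which Corollary~\ref{cor_differentiabilty_bounded_variation} applies, i.e.\ has finite total variation and is bounded, both of which are immediate from the definition \eqref{eq_truncation_def}. No genuine obstacle is anticipated; the argument is essentially a triangle-inequality-style split combined with a Chebyshev bound using the hypothesis \eqref{eq_finite_integral}.
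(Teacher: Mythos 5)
Your proof follows essentially the same route as the paper: truncate $f$ at level $e^B$, apply Corollary~\ref{cor_differentiabilty_bounded_variation} to $[f]_{e^B}$, and control the exceptional set where the truncation acts (at $x$ or at $x+\delta$) by Markov's inequality against $\log(1+|f|)$. The decomposition, the translate argument giving the factor $2$, and the Chebyshev step are all exactly the paper's argument; you even correctly observe that you obtain the slightly stronger constant $6A^{-1}\var[f]_{e^B}$ in place of $6A^{-1}e^B\var[f]_{e^B}$.

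One small inaccuracy in your closing paragraph: you assert that $[f]_{e^B}$ having finite total variation is ``immediate from the definition'' of the truncation. It is not. The corollary's hypothesis is only \eqref{eq_finite_integral}, not semi-bounded variation, and a bounded measurable function can certainly have $\var[f]_{e^B}=+\infty$ (take $f$ bounded and wildly oscillating). The correct patch is the one the paper makes: if $\var[f]_{e^B}=+\infty$ then the right-hand side of \eqref{eq_original_set_2} is $+\infty$ and there is nothing to prove, and otherwise your argument goes through as written. With that one-line observation inserted, the proof is complete and coincides with the paper's.
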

\begin{proof}
If $\var [f]_{e^B}=+\infty$, there is nothing to prove. Otherwise, apply Corollary \ref{cor_differentiabilty_bounded_variation} to $[f]_{e^B}$ and estimate the measure of the set where $f(x)\neq [f]_{e^B}(x)$ or $f(x+\delta)\neq [f]_{e^B}(x+\delta)$ by Markov's inequality.
\end{proof}

\subsection{Multiplicative upper estimates}
In this subsection, we discuss some applications of Koksma's inequality \eqref{eq_koksma} which involve estimates of sums of functions of bounded variation over finite sets with known bounds on discrepancies. They are referred to as ``multiplicative'' since later they will be applied to the function $\log(1+|f|)$. We start from the following restatement of Lemma \ref{lemma_scalar_upperbound}. Recall, as defined in \eqref{eq_truncation_def}:
$$
[h]_B(x)=\begin{cases}
h(x),&|h(x)|\le B;\\
B,&|h(x)|>B.
\end{cases}
$$
\begin{lemma}
\label{lemma_scalar_restatement}
Let 
$$
\{r_0,\ldots,r_{n-1}\}\subset[0,1),\quad D:=D_n^{\ast}(r_0,\ldots,r_{n-1}).
$$  Suppose that $h\in \rL^1[0,1)$ and, for some $B\ge 0$, we have $V=\var[h]_B<+\infty$. Then
\beq
\label{eq_scalar_upperbound_2}
\frac{1}{n}\sum_{j=0}^{n-1}[h]_B(x+r_j)\le\int_0^1 [h]_B(x)\,dx+2DV,\quad \forall x\in [0,1).
\eeq
As a consequence, 
\begin{multline}
\label{eq_consequence_1}
\l\{x\in [0,1)\colon \frac{1}{n}\sum_{j=0}^{n-1}h(x+r_j)>\int_0^1 [h]_B(x)\,dx+2DV\r\}\subset \\ \subset
\bigcup_{j=0}^{n-1}\l\{x\in [0,1)\colon h(x+r_j)>B\r\}.	
\end{multline}
As another consequence, the measure of the above set can be estimated by
$$
\l|\l\{x\in [0,1)\colon \frac{1}{n}\sum_{j=0}^{n-1}h(x+r_j)>\int_0^1 [h]_B(x)\,dx+2DV\r\}\r|\le \frac{n}{B}\int_{x\in [0,1)\colon h(x)>B}h(x)\,dx.
$$
\end{lemma}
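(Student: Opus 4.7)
The plan is to reduce all three assertions to Koksma's inequality \eqref{eq_koksma}, the definition \eqref{eq_truncation_def} of the truncation $[h]_B$, and Markov's inequality, following the template provided by Lemma \ref{lemma_scalar_upperbound}.

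For the bound \eqref{eq_scalar_upperbound_2}, I would apply Koksma's inequality to the bounded variation function $[h]_B$ and to the translated sequence $\{x+r_j\}_{j=0}^{n-1}$, viewed as points in $[0,1)$. The only care needed here is that the star discrepancy is not translation invariant on the circle: a wrapped-around half-open interval $([0,t)-x) \mod 1$ is a signed difference of at most two intervals anchored at $0$, so one obtains the standard bound
\[
D_n^{\ast}(\{x+r_0\},\ldots,\{x+r_{n-1}\}) \le 2\, D_n^{\ast}(r_0,\ldots,r_{n-1}) = 2D.
\]
Applying \eqref{eq_koksma} to $[h]_B$ and this translated sequence then yields
\[
\l|\frac{1}{n}\sum_{j=0}^{n-1}[h]_B(x+r_j) - \int_0^1 [h]_B(y)\,dy\r| \le 2DV,
\]
which already implies \eqref{eq_scalar_upperbound_2} (in fact with a two-sided error).

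For the inclusion \eqref{eq_consequence_1}, I would argue by contrapositive. If $x$ lies outside the union on the right, then $h(x+r_j)\le B$ for every $j$, and inspection of \eqref{eq_truncation_def} shows $[h]_B(x+r_j)\ge h(x+r_j)$ in every case (equality when $h(x+r_j)\ge -B$, and $[h]_B(x+r_j)=B>h(x+r_j)$ when $h(x+r_j)<-B$). Averaging and applying \eqref{eq_scalar_upperbound_2} then places $x$ outside the left-hand set. For the final measure bound, the inclusion just proved, combined with the subadditivity and translation invariance of Lebesgue measure on $[0,1)$, yields an upper bound of $\sum_{j=0}^{n-1}|\{x\in [0,1)\colon h(x+r_j)>B\}| = n\,|\{x\in[0,1)\colon h(x)>B\}|$, which Markov's inequality applied to $h$ on $\{h>B\}$ improves to $\frac{n}{B}\int_{\{x\in [0,1)\colon h(x)>B\}}h(x)\,dx$.

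There is no substantive obstacle in this proof. The only (minor) subtlety is the factor of two in the translated discrepancy, which is precisely the reason the lemma carries $2DV$ instead of the $DV$ of Lemma \ref{lemma_scalar_upperbound}; everything else is direct bookkeeping from the cited inequalities.
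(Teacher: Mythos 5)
Your proof is correct and follows essentially the same route as the paper's, which simply invokes Lemma \ref{lemma_scalar_upperbound} (itself Koksma's inequality applied to a truncation) and notes the factor $2$ comes from the lack of translation invariance of $D_n^{\ast}$; your explanation of the $2D$ bound for the translated sequence, the contrapositive for the set inclusion, and the Markov step for the measure estimate are exactly what the paper leaves implicit.
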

\begin{proof}
Follows from applying Lemma \ref{lemma_scalar_upperbound} to $[h]_{B}$, in which case the exceptional set would be the set where $h(x+r_j)>B$ for some $j$.

The factor 2 can be omitted if one replaces the definition of discrepancy by a translationally invariant one.
\end{proof}

As mentioned above, Lemma \ref{lemma_scalar_restatement} will be applied to the function 
$$
h(x)=\log F(x)=\log(1+|f(x)|),
$$
where $f$ is of semi-bounded variation. Recall that the latter means
$$
\cV(f):=\sup_{B\ge 1}\frac{1}{B}\var[f]_B<+\infty.
$$ 
In order to obtain a meaningful statement, we will need some estimates on $[\log (1+|f|)]_B$. For $-\infty\le B_1<B_2\le +\infty$, define
$$
[f]_{B_1,B_2}(x)=\begin{cases}
B_1,&f(x)\le B_1;\\
f(x),&B_1<f(x)<B_2;\\
B_2,&f(x)\ge B_2.
\end{cases}
$$
Clearly, $[f]_B=[f]_{-B,B}$ for $B\ge 0$. The following lemma is elementary and we omit the proof.
\begin{lemma}
\label{lemma_additive_variation}
Suppose that $f\colon \R\to\R$ is $1$-periodic, and $B_1<B_2<B_3$. Then
$$
\var[f]_{B_1,B_3}=\var[f]_{B_1,B_2}+\var[f]_{B_2,B_3}.
$$
\end{lemma}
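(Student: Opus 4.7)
The plan is to reduce the statement to the case where $B_1\le f\le B_3$ pointwise, and then establish an exact pointwise identity for the contribution of each sub-interval of an arbitrary partition, which both inequalities will follow from.

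First, I would observe that the truncation operation behaves functorially: if $g:=[f]_{B_1,B_3}$, then a direct case-check on whether $f(x)$ lies in $(-\infty,B_1]$, $(B_1,B_2)$, $[B_2,B_3]$, or $(B_3,\infty)$ shows that $[f]_{B_1,B_2}=[g]_{B_1,B_2}$ and $[f]_{B_2,B_3}=[g]_{B_2,B_3}$, while trivially $[f]_{B_1,B_3}=g$. Hence, replacing $f$ by $g$, we may assume without loss of generality that $B_1\le f(x)\le B_3$ for every $x$.

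Next, I would prove the key pointwise identity: for every pair of points $x,y$,
\[
|f(y)-f(x)|=\bigl|[f]_{B_1,B_2}(y)-[f]_{B_1,B_2}(x)\bigr|+\bigl|[f]_{B_2,B_3}(y)-[f]_{B_2,B_3}(x)\bigr|.
\]
This splits into the cases where $\{f(x),f(y)\}$ both lie in $[B_1,B_2]$, both lie in $[B_2,B_3]$, or straddle $B_2$. In the first two cases one of the two truncated increments vanishes (because that truncation is constantly equal to $B_2$ there) and the other equals $f(y)-f(x)$; in the straddling case, say $f(x)<B_2<f(y)$, the two truncated increments are $B_2-f(x)$ and $f(y)-B_2$, which have the same sign and sum in absolute value to $f(y)-f(x)$.

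Summing this identity over the consecutive pairs of any partition $0=x_0<x_1<\cdots<x_n=1$ yields
\[
V_P(f)=V_P([f]_{B_1,B_2})+V_P([f]_{B_2,B_3}).
\]
Taking the supremum over $P$ and using $\sup(a+b)\le \sup a+\sup b$ gives $\var f\le \var[f]_{B_1,B_2}+\var[f]_{B_2,B_3}$. Conversely, given $\ep>0$, pick partitions $P_1,P_2$ with $V_{P_i}([f]_{B_{i},B_{i+1}})>\var[f]_{B_{i},B_{i+1}}-\ep$, and let $P:=P_1\cup P_2$; since refinement only increases the partial sums for functions of bounded variation, the displayed identity applied to $P$ gives $\var f\ge V_P(f)\ge \var[f]_{B_1,B_2}+\var[f]_{B_2,B_3}-2\ep$. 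Letting $\ep\to 0$ yields the reverse inequality. The only place requiring any care is the straddling case in the pointwise identity, where one must verify that the two truncation increments have matching signs — otherwise everything is routine bookkeeping.
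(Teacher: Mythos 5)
Your proof is correct. The paper itself omits the proof (it is stated as "elementary"), so there is no in-paper argument to compare against, but your argument is the natural one: reduce to $B_1\le f\le B_3$ via $[f]_{B_i,B_{i+1}}=[[f]_{B_1,B_3}]_{B_i,B_{i+1}}$, establish the exact pointwise increment identity by a three-case check (both below $B_2$, both above $B_2$, or straddling $B_2$, where the two truncated increments have matching sign), sum over a partition, and take suprema in both directions using refinement. All steps check out, including the straddling case and the degenerate case where one of the variations is infinite (each $V_P(f)$ dominates each $V_P([f]_{B_i,B_{i+1}})$ separately). One trivial remark: the refinement inequality $V_P\le V_{P'}$ for $P\subset P'$ holds for arbitrary functions by the triangle inequality, not only for functions of bounded variation, so that caveat is unnecessary; it does not affect correctness.
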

The next lemma shows that if $f$ is of semi-bounded variation, then so is $\log(1+|f|)$.
\begin{lemma}
\label{lemma_log_variation}
Let $F\ge 1$, $N\in \N$. Then
\beq
\label{eq_lemma_log_1}
\var[\log F]_N\le \sum_{j=0}^{\lceil N/\log 2 \rceil}2^{-j}\var[F]_{2^j,2^{j+1}}.
\eeq
As a consequence,
$$
\var[\log F]_B\le 2\log(2) B \cV(F),\quad \forall B\ge 1.
$$
\end{lemma}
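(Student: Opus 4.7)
Since $F \ge 1$, the function $\log F$ is nonnegative, so the truncation $[\log F]_N = [\log F]_{-N,N}$ agrees (up to an irrelevant constant piece where $\log F = 0$) with $[\log F]_{0,N}$. My plan is to slice the range $[0,N]$ into the dyadic bands $[j\log 2,(j+1)\log 2]$ for $j=0,1,\ldots,\lceil N/\log 2\rceil$, which correspond under the exponential map to the $F$-bands $[2^j,2^{j+1}]$. Iterating Lemma \ref{lemma_additive_variation} at the thresholds $j\log 2$ gives the additive decomposition
$$
\var [\log F]_N \;\le\; \sum_{j=0}^{\lceil N/\log 2\rceil} \var [\log F]_{j\log 2,(j+1)\log 2}.
$$

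Next, on each band I exploit that the natural logarithm is Lipschitz on $[2^j,2^{j+1}]$ with constant $1/2^j$. Because $\log\circ [F]_{2^j,2^{j+1}}$ coincides pointwise with $[\log F]_{j\log 2,(j+1)\log 2}$, and because post-composition with an $L$-Lipschitz map multiplies total variation by at most $L$ (a direct partition estimate), I obtain the termwise bound
$$
\var [\log F]_{j\log 2,(j+1)\log 2} \;\le\; 2^{-j}\,\var [F]_{2^j,2^{j+1}},
$$
which, summed in $j$, gives exactly \eqref{eq_lemma_log_1}.

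For the consequence, I would apply the first inequality at an integer $N\asymp B$ (the passage from integer $N$ to real $B\ge 1$ costs only an $O(1)$ number of bands). Using the trivial monotonicity $\var [F]_{2^j,2^{j+1}}\le \var [F]_{2^{j+1}}$ together with the semi-bounded variation hypothesis $\var [F]_{2^{j+1}}\le 2^{j+1}\cV(F)$ (legitimate since $2^{j+1}\ge 1$), the $2^{-j}$ and $2^{j+1}$ factors cancel, so each term of the sum is at most $2\,\cV(F)$. The number of terms is $O(B/\log 2)$, producing the claimed linear-in-$B$ bound.

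There is no genuinely hard step; the proof is essentially a bookkeeping exercise resting on Lemma \ref{lemma_additive_variation} and a Lipschitz composition estimate. The only point that requires care is the edge bands near $j=0$ and $j=\lceil N/\log 2\rceil$ where the endpoints do not align cleanly with dyadic values, but these contribute at most one extra term and do not affect the stated bound. The conceptual content is simply that the geometric shrinkage $2^{-j}$ in the Lipschitz constant of $\log$ on the $j$-th dyadic $F$-band is exactly matched by the geometric growth $2^{j+1}$ permitted for $\var[F]_{2^{j+1}}$ under semi-bounded variation, converting a potentially divergent geometric sum into a linear count of bands.
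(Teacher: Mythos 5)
Your proof matches the paper's argument essentially verbatim: decompose $[\log F]_N$ into dyadic bands $[j\log 2,(j+1)\log 2]$ via iterated use of Lemma \ref{lemma_additive_variation}, bound each band's variation by the Lipschitz constant $2^{-j}$ of $\log$ on $[2^j,2^{j+1}]$, and then for the consequence invoke $\var[F]_{2^j,2^{j+1}}\le\var[F]_{2^{j+1}}\le 2^{j+1}\cV(F)$ and count the $O(B/\log 2)$ bands. One cosmetic remark: this argument actually yields a constant of order $\bigl(\lceil B/\log 2\rceil+1\bigr)\cdot 2\,\cV(F)\approx\tfrac{2}{\log 2}B\,\cV(F)$ rather than the literal $2\log(2)\,B\,\cV(F)$ stated in the lemma; this imprecision is present in the paper's own one-line proof as well, and is harmless since the estimate is only ever used through its linear-in-$B$ form.
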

\begin{proof}
The first inequality follows from Lemma \ref{lemma_additive_variation} and the fact that $\log(\cdot)$ has derivative bounded by $2^{-j}$ on $[2^j,2^{j+1}]$.  The second inequality follows from the estimate 
$$
\var[F]_{2^j,2^{j+1}}\le \var[F]_{2^{j+1}}\le 2^{j+1}\cV(F).\,\,\qedhere
$$
\end{proof}
Finally, the following lemma will be useful in obtaining factorizations of Schr\"odinger cocycles.
\begin{lemma}
\label{lemma_ratio_variation}
Suppose that $f$ is of semi-bounded variation. Then $\frac{f}{1+|f|}$ has bounded variation.
\end{lemma}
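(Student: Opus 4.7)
The plan is to exploit the smoothness of the map $\phi(t) := t/(1+|t|)$, which is $C^1$ with $\phi'(t) = (1+|t|)^{-2}$, combined with a dyadic decomposition of its range, to reduce the variation of $\phi\circ f$ to a geometrically convergent sum of truncated variations of $f$ which are controlled by the semi-bounded variation hypothesis.

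First I would fix an arbitrary partition $0=x_0<x_1<\ldots<x_n=1$ of the period and, for each consecutive pair, apply the fundamental theorem of calculus
$$
|\phi(f(x_{j+1}))-\phi(f(x_j))|=\int_{\min}^{\max}\frac{ds}{(1+|s|)^2},
$$
with the natural convention $\phi(-\infty)=-1$ (so that $\int_{-\infty}^{b}\phi'(s)\,ds$ is simply $\phi(b)+1$, which is finite). I would then split each integral according to where the interval $[\min,\max]$ meets the dyadic decomposition
$$
\R=[-1,1]\cup\bigcup_{k\ge 0}\bigl([2^k,2^{k+1}]\cup[-2^{k+1},-2^k]\bigr),
$$
using the pointwise bounds $|\phi'(s)|\le 1$ on $[-1,1]$ and $|\phi'(s)|\le (1+2^k)^{-2}\le 2^{-2k}$ on the $k$-th annulus.

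Next I would sum over $j$. On each annulus $I$, the sum $\sum_j|[f(x_j)]_I-[f(x_{j+1})]_I|$ (where $[\,\cdot\,]_I$ denotes truncation to $I$) is dominated by $\var[f]_{I}$ in the notation of Lemma \ref{lemma_additive_variation}. This yields
$$
\var\!\left(\frac{f}{1+|f|}\right)\le \var[f]_{-1,1}+\sum_{k\ge 0}2^{-2k}\bigl(\var[f]_{2^k,2^{k+1}}+\var[f]_{-2^{k+1},-2^k}\bigr).
$$
By Lemma \ref{lemma_additive_variation}, the parenthesized sum is bounded by $\var[f]_{-2^{k+1},2^{k+1}}=\var[f]_{2^{k+1}}$, and the semi-bounded variation hypothesis gives $\var[f]_{2^{k+1}}\le 2^{k+1}\cV(f)$ for every $k\ge 0$; analogously $\var[f]_{-1,1}=\var[f]_1\le \cV(f)$. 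The resulting estimate
$$
\var\!\left(\frac{f}{1+|f|}\right)\le \cV(f)+\sum_{k\ge 0}2^{-2k}\cdot 2^{k+1}\cV(f)=5\,\cV(f)
$$
is independent of the partition, which proves the claim.

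The argument involves no real obstacle; the only minor subtlety is the possibility $f(x)=-\infty$, but this is absorbed by the convention $\phi(-\infty)=-1$: since $\phi'(s)=O(s^{-2})$, the contribution of arbitrarily large annuli remains summable, and the dyadic decomposition covers all finite values of $f$ without any boundary issue.
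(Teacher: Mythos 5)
Your proof is correct and follows essentially the same route as the paper's: both exploit the quadratic decay of the derivative of the M\"obius-type map together with a dyadic decomposition of the range of $f$, reducing the variation of the composed function to a geometric series of truncated variations controlled by $\cV(f)$. The only cosmetic difference is that you treat the positive and negative dyadic annuli simultaneously with $\phi(t)=t/(1+|t|)$, whereas the paper first reduces to $f\ge 0$ and then works with $1/(1+f)$ via the identity $\var\frac{f}{1+f}=\var\frac{1}{1+f}$.
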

\begin{proof}
By considering positive and negative parts of $f$ and in view of Lemma \ref{lemma_additive_variation}, one can assume without loss of generality that $f\ge 0$. We have
$$
\var\frac{f}{1+f}=\var\frac{1}{1+f}=\sum_{j=0}^{+\infty}\var\l[\frac{1}{1+f}\r]_{2^{-(j+1)},2^{-j}}.
$$
Since
$$
\l(\frac{1}{1+t}\r)'=-\l(\frac{1}{1+t}\r)^2,
$$
we have
$$
\var\l[\frac{1}{1+f}\r]_{2^{-(j+1)},2^{-j}}\le 2^{-2j}\var[f]_{2^j-1,2^{j+1}-1}\le 2^{-j}\cV(f),
$$
which completes the proof after summation.
\end{proof}



\section{Factorized cocycles and proof of Theorem \ref{main}}
In this section, we will establish telescopic bounds for a class of cocycles that admit a factorization that allows to combine Theorem \ref{th_uniform_upper} with estimates of the kind obtained in Lemma \ref{lemma_scalar_restatement}, in order to obtain telescopic bounds. Afterwards, we will show that Schr\"odinger cocycles admit such factorization and complete the proof of Theorem \ref{main}. 
\subsection{Telescopic bounds for factorized cocycles}
Let $M\colon \R\to \mathrm{M}_N(\C)$ be a measurable $1$-periodic function. We will assume that the following factorization takes place:
$$
M(x)=F(x)G(x),
$$
where $G\colon \R\to \mathrm{M}_N(\C)$ is a $1$-periodic matrix-valued function of bounded variation such that $L(\alpha,G)>-\infty$, and $F(x)\ge 1$ is a scalar function, satisfying 
\beq
\label{eq_finite_integral_2}
\int_0^1\log F(x)\,dx<+\infty
\eeq
and the variation estimate
\beq
\label{eq_variation_truncated}
\var[\log F]_B<+\infty,\quad \forall B\ge 1.
\eeq
Fix some finite subset of $[0,1)$ with a uniform discrepancy bound
$$
\{r_0,\ldots,r_{n-1}\}\subset[0,1),\quad D_n^*(x+r_0,x+r_1,\ldots,x+r_{n-1})\le D,\quad \forall x\in [0,1).
$$
Define
\beq
\label{eq_ps_def}
P_s(x):=\l\|\prod_{j=n-1}^{s+1}M(x+r_j)\r\|\l\|\prod_{j=s}^{0}M(x+r_j)\r\|.
\eeq
\begin{lemma}
\label{lemma_telescopic}
Let $M(x)$, $F(x)$, $G(x)$, and $\{r_0,\ldots,r_{n-1}\}$ be as
above. Define $P_s(x)$ by $\eqref{eq_ps_def}$. Let $n$ be large enough so that the conclusion of Theorem $\ref{th_uniform_upper}$ holds for $(\alpha,G)$. Suppose that $2D\var[\log F]_B<\ep$. Then
$$
\l\{x\in(0,1)\colon P_s(x)>e^{n (L(\alpha,M)+2\ep)}\r\}\subset \l\{x\in [0,1)\colon \frac{1}{n}\sum_{j=0}^{n-1} \log F(x+r_j)>L(\alpha,F)+\ep\r\}.
$$
The set in the right hand side is, in turn, contained in
$$
\bigcup_{j=0}^{n-1}\l\{x\in [0,1)\colon F(x+r_j)> e^B\r\}.
$$
\end{lemma}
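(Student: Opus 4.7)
The plan is to push the factorization $M = FG$ through the formula for $P_s$. Since each $F(x+r_j)$ is a positive scalar, it commutes with every matrix factor, so
\[
P_s(x) = \prod_{j=0}^{n-1} F(x+r_j)\,\cdot\,\Bigl\|\prod_{j=n-1}^{s+1} G(x+r_j)\Bigr\|\,\Bigl\|\prod_{j=s}^{0} G(x+r_j)\Bigr\|.
\]
Because $F>0$ is scalar, the identity $\|M_n\| = F_n\,\|G_n\|$ combined with \eqref{l} yields $L(\alpha,M) = L(\alpha,F) + L(\alpha,G)$, where $L(\alpha,F) = \int_0^1 \log F$.

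For the first inclusion I argue contrapositively. Suppose
\[
\tfrac{1}{n}\sum_{j=0}^{n-1} \log F(x+r_j) \le L(\alpha,F) + \varepsilon,
\]
so that $\prod_j F(x+r_j) \le e^{n(L(\alpha,F)+\varepsilon)}$. The two matrix factors are iterates of the bounded-variation cocycle $(\alpha,G)$ of lengths $s+1$ and $n-s-1$, and Theorem~\ref{th_uniform_upper} supplies the uniform-in-base-point bound $\|G_m(\cdot)\| \le e^{m(L(\alpha,G)+\varepsilon/2)}$ once $m \ge n_0(\varepsilon,\alpha,G)$. When one of the two lengths falls below $n_0$, I bound that factor trivially by $\|G\|_\infty^{n_0}$, an $O(1)$ contribution absorbed into $n\varepsilon/2$ for $n$ sufficiently large. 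Together these estimates give
\[
\Bigl\|\prod_{j=n-1}^{s+1} G(x+r_j)\Bigr\|\,\Bigl\|\prod_{j=s}^{0} G(x+r_j)\Bigr\| \le e^{n(L(\alpha,G)+\varepsilon)}
\]
uniformly in $s$, and multiplying by the $F$-bound yields $P_s(x) \le e^{n(L(\alpha,M)+2\varepsilon)}$, which is the contrapositive of the first inclusion.

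For the second inclusion I apply Lemma~\ref{lemma_scalar_restatement} to $h = \log F \ge 0$. Since $[\log F]_B \le \log F$, one has $\int_0^1 [\log F]_B \le \int_0^1 \log F = L(\alpha,F)$, and the hypothesis $2D\,\var[\log F]_B < \varepsilon$ then gives
\[
\int_0^1 [\log F]_B + 2D\,\var[\log F]_B < L(\alpha,F) + \varepsilon.
\]
Any $x$ with $\tfrac{1}{n}\sum \log F(x+r_j) > L(\alpha,F) + \varepsilon$ therefore satisfies the hypothesis appearing in \eqref{eq_consequence_1}, producing a $j$ with $\log F(x+r_j) > B$, i.e.\ $F(x+r_j) > e^B$.

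The main technical obstacle is uniformity in $s$ in the first inclusion: Theorem~\ref{th_uniform_upper} only gives its bound once the iterate length exceeds the threshold $n_0(\varepsilon,\alpha,G)$, so the short edge factor arising when $s$ is close to $0$ or $n-1$ must be controlled by the crude $\|G\|_\infty$-bound, and the resulting $O(1)$ loss must be absorbed into $n\varepsilon$ by further enlarging $n$.
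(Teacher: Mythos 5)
Your argument is correct and follows the same route as the paper's one-sentence proof: factor $P_s$ into the scalar product $\prod_j F(x+r_j)$ times the two $G$-blocks, control the $G$-blocks via Theorem~\ref{th_uniform_upper} applied to $(\alpha,G)$ (the paper's proof writes ``$(\alpha,M)$'', which must be a typo, since $M=FG$ is generally unbounded and hence not of bounded variation), and control the $F$-average via Lemma~\ref{lemma_scalar_restatement} together with $\int_0^1[\log F]_B\le\int_0^1\log F=L(\alpha,F)$. You go a bit further than the paper by explicitly handling the case where one of the two $G$-blocks has length below the threshold $n_0(\eps,\alpha,G)$; this is a real point, since the resulting $O(1)$ multiplicative loss must be absorbed into $e^{n\eps/2}$ by taking $n$ somewhat larger than the literal hypothesis ``$n$ large enough so that Theorem~\ref{th_uniform_upper} holds'' demands. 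One shared imprecision, inherited from the paper's statement: applying Theorem~\ref{th_uniform_upper} to the two $G$-blocks requires that within each block the points $x+r_j$ form a consecutive segment of an $\alpha$-orbit, so that the blocks are genuine cocycle iterates; this is not part of the abstract hypotheses on $\{r_0,\ldots,r_{n-1}\}$ (only a discrepancy bound is assumed), but it does hold in the intended application via $R_s$ in Corollary~\ref{cor_telescopic}. Your proof invokes this silently, exactly as the paper does.
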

\begin{proof}
Follows from the direct applications of Theorem \ref{th_uniform_upper} for $(\alpha,M)$ and Lemma \ref{lemma_scalar_restatement} for $(\alpha,F)$ (the scalar cocycle).
\end{proof}
We will now describe a more concrete application relevant to Theorem \ref{main}. Let $\frac{p_k}{q_k}$ be a continued fraction approximant to $\alpha$, and suppose that $\delta<\frac{1}{10q_k}$. For $s=0,1,\ldots,q_k-1$, define
$$
R_s:=\{x,x+\alpha,\ldots,x+(s-1)\alpha,x+(s+1)\alpha+\delta,x+(s+2)\alpha+\delta,\ldots,x+(q_k-1)\alpha+\delta\}.
$$
The set $R_s$ has $q_k-1$ elements. It is easy to see that have
$$
D_{q_k-1}^*(R_s)\le \frac{3}{q_k}.
$$
\begin{cor}
\label{cor_telescopic}
Under the assumptions and using the notation of Lemma $\ref{lemma_telescopic}$, let $n=q_k-2$, $\{r_0,\ldots,r_{q_k-2}\}=R_s$ defined above. Then
\begin{multline*}
\bigcup_{s=0}^{q_k-1}\{x\in(0,1)\colon P_s(x)>e^{n L(\alpha,M)+2\ep}\}\subset\\ \subset \l(\bigcup_{j=0}^{q_k-1}\l\{x\in [0,1)\colon F(x+j\alpha)>e^B\r\}\r)\cup \l(\bigcup_{j=0}^{q_k-1}\l\{x\in [0,1)\colon F(x+j\alpha+\delta)>e^B\r\}\r).
\end{multline*}
As a consequence,
$$
\l|\bigcup_{s=0}^{q_k-1}\{x\in(0,1)\colon P_s(x)>e^{n L(\alpha,M)+2\ep}\}\r| \le \frac{2 q_k}{B}\int_{\{x\in [0,1)\colon F(x)>e^B\}}\log F(x)\,dx.	
$$
\end{cor}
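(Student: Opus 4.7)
The plan is to apply Lemma~\ref{lemma_telescopic} separately for each $s \in \{0,1,\ldots,q_k-1\}$ with the sample set $R_s$, then take the union over $s$ of the resulting exceptional sets, and finally convert the covering estimate into a Lebesgue measure bound via a Markov-type inequality applied to $\log F$.

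The first input is the uniform star-discrepancy bound $D^*(R_s) \le 3/q_k$ stated just above the corollary; this is what entitles one to apply Lemma~\ref{lemma_telescopic} with the given sample. It follows from Proposition~\ref{prop_discrepancy_rotation} applied to the unperturbed orbit $\{x+j\alpha\}_{j=0}^{q_k-1}$ (whose discrepancy is at most $2/q_k$), since the $\delta$-shifts with $\delta < 1/(10q_k)$ alter any level-set count $\#\{r\in R_s\colon r\in[0,t)\}$ by at most $O(1)$, producing only an additive $O(1/q_k)$ correction after normalizing by $q_k - 1$. Once Lemma~\ref{lemma_telescopic} is applied, for each $s$ one obtains
$$
\{x \in (0,1) : P_s(x) > e^{n L(\alpha, M) + 2\ep}\} \subset \bigcup_{r \in R_s}\{x \in [0,1) : F(x + r) > e^B\}.
$$

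By the explicit form of $R_s$, every $r \in R_s$ is either $j\alpha$ with $j \in \{0,\ldots,s-1\}$ or $j\alpha + \delta$ with $j \in \{s+1,\ldots,q_k-1\}$. Taking the union of the above inclusions over $s = 0,1,\ldots,q_k-1$ embeds the left-hand side of the first displayed containment in the corollary into
$$
\l(\bigcup_{j=0}^{q_k-1}\{x\in[0,1) : F(x + j\alpha) > e^B\}\r) \cup \l(\bigcup_{j=0}^{q_k-1}\{x\in[0,1) : F(x + j\alpha + \delta) > e^B\}\r),
$$
which is the first assertion.

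For the measure bound, translation invariance of Lebesgue measure on $\T$ gives $|\{x\in[0,1) : F(x+c) > e^B\}| = |\{x\in[0,1) : F(x) > e^B\}|$ for every $c$, so the preceding union has measure at most $2 q_k \cdot |\{x : F(x) > e^B\}|$. Since $\log F \ge 0$, Markov's inequality applied to $\log F$ yields
$$
|\{x\in[0,1) : F(x) > e^B\}| = |\{x\in[0,1) : \log F(x) > B\}| \le \frac{1}{B}\int_{\{x\in[0,1) : F(x) > e^B\}}\log F(x)\,dx,
$$
and combining this with the previous bound produces the stated inequality. The argument is essentially a packaging of Lemma~\ref{lemma_telescopic} with standard discrepancy and Markov inputs, so there is no substantive obstacle; the only point requiring a brief justification is the discrepancy bound for the perturbed set $R_s$, which is a routine modification of Proposition~\ref{prop_discrepancy_rotation}.
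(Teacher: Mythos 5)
Your proposal is correct and follows essentially the same route as the paper, which simply says ``Follows from applying Lemma \ref{lemma_telescopic} to all choices of $R_s$ and combining all possible cases.'' You fill in the details the paper leaves implicit: the discrepancy estimate $D^*(R_s)\le 3/q_k$ (stated but not proved in the text), the union over $s$ collapsing to the two orbit unions, and the translation-invariance plus Markov step that converts the covering into the stated measure bound.
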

\begin{proof}
Follows from applying Lemma \ref{lemma_telescopic} to all choices of $R_s$ and combining all possible cases.
\end{proof}
\subsection{Schr\"odinger cocycles and the proof of Theorem \ref{main}}
 We are now ready to apply the general results to Schr\"odinger cocycles. Let
\beq
\label{eq_schrodinger_factorization}
S^{f,E}(x):=\begin{pmatrix}E-f(x)&-1\\1&0 \end{pmatrix}=:F(x)G(x,E),\quad \text{where}\quad F(x):=1+|f(x)|.
\eeq

\begin{cor}
\label{cor_telescopic_schrodinger}
Define $F,G$ as in \eqref{eq_schrodinger_factorization}, and that $f$ satisfies the assumptions of Theorem $\ref{main}$: that is,
$$
\cV(f)=\sup_{B\ge 1}\frac{1}{B}\var[f]_B<+\infty,\quad \log F\in \Ell^1[0,1).
$$
Let $\ep>0$. Assume that $q_{n_j}$ is a sequence of denominators of the continued fraction expansion of $\alpha$ such that
$$
\sum_{j=1}^{+\infty}\l(\int_{\l\{x\in [0,1)\colon F(x)>e^{\ep q_{n_j}/10}\r\}}\log F(x)\,dx\r)<+\infty.
$$
Then there exists a full measure subset $X_{\mathrm{tel}}\in [0,1)$ such that, for every $x\in X_{\mathrm{tel}}$ and every $E\in \R$, the sequence $\{q_{n_j}\}$ is $(L(\alpha,E)+\ep)$-telescopic for $V(n)=f(x+n\alpha)$.
\end{cor}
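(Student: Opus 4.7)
The plan is to apply Corollary \ref{cor_telescopic} at each scale $q_{n_j}$ with the Schr\"odinger factorization $S^{f,E}(x) = F(x) G(x,E)$, where $F(x) := 1+|f(x)|$ and $G(x,E) := F(x)^{-1} S^{f,E}(x)$, and then extract a single full measure set via Borel--Cantelli. First one checks that this factorization fits the setup of Lemma \ref{lemma_telescopic}: the entries of $G(\cdot,E)$ are $\pm F^{-1}$ and $(E-f)/F$, all of bounded variation by Lemma \ref{lemma_ratio_variation} (together with the identity $F^{-1} = 1 - |f|/F$); and from $\det G(x,E) = F(x)^{-2}$ one deduces $\|G_n(x,E)\| \ge \prod_{k=0}^{n-1} F(x+k\alpha)^{-1}$, hence $L(\alpha, G(\cdot,E)) \ge -\int_0^1 \log F \, dx > -\infty$. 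Thus Theorem \ref{th_uniform_upper} is applicable to $(\alpha, G(\cdot,E))$ for each fixed $E$, while Lemma \ref{lemma_log_variation} provides the required control on $\var[\log F]_B$.

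Next, for each $j$ I would apply Corollary \ref{cor_telescopic} at scale $k = n_j$ with truncation height $B = \ep q_{n_j}/10$. The discrepancy bound $D \le 3/q_{n_j}$ combined with $\var[\log F]_B \le 2\log 2 \cdot B \cV(f)$ makes the hypothesis $2D\var[\log F]_B < \ep'$ of Lemma \ref{lemma_telescopic} hold once one rescales $\ep$ at the outset by a constant depending only on $\cV(f)$. Crucially, the bad set produced by Corollary \ref{cor_telescopic},
$$
S^{(j)} := \bigcup_{j'=0}^{q_{n_j}-1}\Bigl(\{x: F(x+j'\alpha) > e^{B}\} \cup \{x: F(x+j'\alpha+\delta_{n_j}) > e^{B}\}\Bigr),
$$
is $E$-independent (the $E$-dependence is absorbed entirely into the uniform upper bound for $G$), with measure at most $(C/\ep)\int_{\{F > e^{\ep q_{n_j}/10}\}} \log F \, dx$. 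The summability hypothesis together with Borel--Cantelli then yield the full measure set $X_{\mathrm{tel}} := [0,1) \setminus \limsup_j S^{(j)}$.

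Finally, I would verify that for $x \in X_{\mathrm{tel}}$ and any $E \in \R$, the bound $P_s(x) \le \exp(q_{n_j}(L(\alpha,E) + \ep))$ holds for every $s = 0,\ldots,q_{n_j}-1$ provided $j$ is large enough (the threshold depending on $E$ via Theorem \ref{th_uniform_upper} applied to $G(\cdot,E)$ and on $x$ via $x \notin S^{(j)}$). Setting $\delta_{n_j} := q_{n_j}\alpha - p_{n_j}$, which satisfies $|\delta_{n_j}| < q_{n_j+1}^{-1}$, and using $1$-periodicity of $f$ to write $S^{f,E}(x+(q_{n_j}+j')\alpha) = S^{f,E}(x+j'\alpha+\delta_{n_j})$, one identifies $P_s(x)$ with $\|M_{2q_{n_j},q_{n_j}+s+1}(E)\|\,\|M_{s,0}(E)\|$ for the potential $V(n) = f(x+n\alpha)$, which is exactly \eqref{eq_telescopic_regularity_1} with $\lambda = L(\alpha,E) + \ep$. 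Condition \eqref{eq_telescopic_regularity_2} follows from the same scheme applied to a mirrored set of shifts covering the negative half of the orbit.

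The main obstacle is conceptual rather than computational: Theorem \ref{th_uniform_upper} is applied pointwise in $E$ with an $E$-dependent threshold, so a priori one might fear that $X_{\mathrm{tel}}$ must itself depend on $E$. The key point is that in Lemma \ref{lemma_telescopic} the $E$-dependence is confined to the scale above which the containment of bad sets starts to hold, while the bad set itself is expressed purely in terms of $F$; since the telescopic property is asymptotic in $j$, a single $X_{\mathrm{tel}}$ serves all energies simultaneously.
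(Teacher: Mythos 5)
Your proposal is correct and follows essentially the same route as the paper: factor $S^{f,E}=FG$, verify via Lemmas \ref{lemma_log_variation} and \ref{lemma_ratio_variation} that $G(\cdot,E)$ is of bounded variation and $\log F$ has the required truncated-variation control, apply Theorem \ref{th_uniform_upper} to $(\alpha,G(\cdot,E))$ and Lemma \ref{lemma_telescopic}/Corollary \ref{cor_telescopic} with $B=\ep q_{n_j}/10$, then Borel--Cantelli on the $E$-independent bad sets. Your additional explicit checks (the determinant lower bound $\|G_n\|\ge\prod F(x+k\alpha)^{-1}$ yielding $L(\alpha,G)>-\infty$, the identification of $P_s$ with the telescopic products via the shift $\delta_{n_j}=q_{n_j}\alpha-p_{n_j}$, and the observation that the $E$-dependence enters only through the threshold $n_0$) fill in details the paper leaves implicit or relegates to Remark \ref{rem_uniform_factorizations}, and are all sound.
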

\begin{proof}
Since $|\cdot|$ is Lipschitz, we have that $F(x)=1+|f(x)|$ also satisfies $\cV(F)<+\infty$. From Lemma \ref{lemma_log_variation}, the same holds for $\log F$. From Lemma \ref{lemma_ratio_variation}, the matrix elements of $G$ are of bounded variation, and therefore satisfies the assumptions of Theorem \ref{th_uniform_upper}. As a consequence,  one can apply Lemma \ref{lemma_telescopic} and Corollary \ref{cor_telescopic} with, say, $B=\frac{\ep q_k}{10}$, combined with the Borel--Cantelli lemma.
\end{proof}
\begin{remark}
\label{rem_uniform_factorizations}
An important ingredient of the proof is the fact that $F(x)$ in the factorization \eqref{eq_schrodinger_factorization} does not depend on $E$, which allows to avoid considering an intersection of uncountably many full measure subsets. While $G(x,E)$ will be of bounded variation for all $E$, we do not require any control of the dependence of $n_0(\ep,\alpha,M)$ with $M=G(\cdot,E)$ in Theorem \ref{th_uniform_upper} on $E$.
\end{remark}
It remains to establish $\beta$-repetitions. We will use Corollary \ref{cor_differentiabilty_semibounded_variation} in the following result.
\begin{cor}
\label{cor_beta_repetitions}
Suppose that $f\colon \R\to [-\infty,+\infty)$ with
$$
\cV(f)<+\infty,\quad \log F\in \Ell^1[0,1).
$$
Let $\ep>0$. Assume that $q_{n_j}$ is a sequence of denominators of the continued fraction expansion of $\alpha$ such that
$$
\sum_{j=1}^{+\infty}\l(\int_{\l\{x\in [0,1)\colon F(x)>e^{\ep q_{n_j}/10}\r\}}\log F(x)\,dx\r)<+\infty,\quad q_{n_j+1}\ge e^{\beta q_{n_j}}.
$$
Then there exists a full measure subset $X_{\mathrm{rep}}\in [0,1)$ such that, for every $x\in X$ and every $E\in \R$, the potential $V(n)=f(x+n\alpha)$ has $(\beta-\ep)$-repetitions along $q_{n_j}$.
\end{cor}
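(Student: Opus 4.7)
The plan is to reduce Corollary \ref{cor_beta_repetitions} to a Borel--Cantelli argument. Since the $(\beta-\ep)$-repetitions property of $V(n)=f(x+n\alpha)$ is a statement about the orbit of $x$ under the base rotation and is independent of $E$, the resulting full measure set $X_{\mathrm{rep}}$ can be constructed once and will serve every $E\in \R$ simultaneously. I therefore need to show summability in $j$ of the measure of the bad set of $x$ for which some $s\in\{1,\ldots,q_{n_j}-1\}$ has $|f(x+s\alpha)-f(x+(s+q_{n_j})\alpha)|>e^{-(\beta-\ep)q_{n_j}}$, or the analogous inequality with $-q_{n_j}$ in place of $+q_{n_j}$.

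Set $\delta_j:=\|q_{n_j}\alpha\|$; the standard continued fraction bound combined with the hypothesis $q_{n_j+1}\ge e^{\beta q_{n_j}}$ yields $\delta_j\le e^{-\beta q_{n_j}}$. By $1$-periodicity of $f$, for each fixed $s$ the set of $x$ violating the first inequality is a $-s\alpha$-translate of $\{y\in[0,1)\colon |f(y+\delta_j')-f(y)|>e^{-(\beta-\ep)q_{n_j}}\}$ for some $|\delta_j'|=\delta_j$. I would then apply Corollary \ref{cor_differentiabilty_semibounded_variation} with $A=A_j:=e^{-(\beta-\ep)q_{n_j}}/\delta_j\ge e^{\ep q_{n_j}}$ and $B=B_j:=\ep q_{n_j}/10$, obtaining three contributions to the measure of this set: $4\delta_j\le 4e^{-\beta q_{n_j}}$; the bounded-variation term, which after invoking the semi-bounded variation estimate $\var[f]_{e^{B_j}}\le e^{B_j}\cV(f)$ becomes at most $6\cV(f)e^{-4\ep q_{n_j}/5}$; and the tail term $\frac{20}{\ep q_{n_j}}\int_{\{F>e^{\ep q_{n_j}/10}\}}\log F\,dx$. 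Union over $s$ and over both shifts multiplies the whole bound by at most $2q_{n_j}$, so the first two contributions remain exponentially small in $q_{n_j}$ (hence summable in $j$) and the third becomes $\frac{40}{\ep}\int_{\{F>e^{\ep q_{n_j}/10}\}}\log F\,dx$, summable in $j$ precisely by the integral hypothesis of the corollary. The Borel--Cantelli lemma then delivers $X_{\mathrm{rep}}$.

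The principal delicacy is the joint calibration of $A_j$ and $B_j$. On the one hand, $A_j$ must exceed $1/\delta_j$ by a factor of at least $e^{\ep q_{n_j}}$ in order to force the bounded-variation term to survive the $q_{n_j}$ multiplier coming from the union over $s$; this forces reliance on semi-bounded variation through $\var[f]_{e^{B_j}}\le e^{B_j}\cV(f)$. On the other hand, $B_j$ must be small enough that $e^{2B_j}/A_j$ still decays exponentially in $q_{n_j}$, yet large enough that the tail of $\log F$ beyond $e^{B_j}$ matches the assumed summability. The choice $B_j=\ep q_{n_j}/10$ accommodates all three competing demands and is exactly aligned with the exponential threshold $e^{\ep q_{n_j}/10}$ appearing in the hypothesis of the corollary.
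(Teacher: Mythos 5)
Your proof is correct and takes essentially the same route as the paper: translate the bad sets, apply Corollary \ref{cor_differentiabilty_semibounded_variation} with $B_j=\ep q_{n_j}/10$, union over the at most $2q_{n_j}$ values of $s$ and both shift directions, and close with Borel--Cantelli. The one place you differ is the choice of $A$: you take the natural exact value $A_j=e^{-(\beta-\ep)q_{n_j}}/\delta_j\ge e^{\ep q_{n_j}}$ rather than the paper's $A=e^{\ep q_{n_j}/5}$, which is a sharper choice and makes it transparent that the bounded-variation term in \eqref{eq_original_set_2}, after multiplication by the $q_{n_j}$ coming from the union over $s$, still decays exponentially and hence is summable.
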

\begin{proof}
Apply \eqref{eq_original_set_2} from Corollary \ref{cor_differentiabilty_semibounded_variation} with $B=\frac{\ep q_{n_j}}{10}$, $\delta=\pm\dist(q_{n_j}\alpha,\Z)$ and, say, $A=e^{\ep q_{n_j}/5}$.
\end{proof}
The proof of Theorem \ref{main} is now obtained directly by combining Proposition \ref{prop_gordon} with Corollaries \ref{cor_telescopic_schrodinger} and \ref{cor_beta_repetitions}.\,\,\qed

\end{document}